\newcommand{\Cc}{\mathbb{C}}
\newcommand{\Nn}{\mathbb{N}}
\newcommand{\Zz}{\mathbb{Z}}
\newcommand{\Qq}{\mathbb{Q}}
\newcommand{\Ff}{\mathbb{F}}
\renewcommand {\leq}{\leqslant}
\renewcommand {\geq}{\geqslant}
\renewcommand{\ast}{\star}
\theoremstyle{plain}
\newtheorem{theorem}{Theorem}[section]    
\newtheorem{lemma}[theorem]{Lemma}       
\newtheorem{proposition}[theorem]{Proposition}      
\newtheorem{corollary}[theorem]{Corollary}      
\newtheorem*{conjecture*}{Conjecture}
\newtheorem*{theorem*}{Theorem}
\theoremstyle{remark}
\newtheorem{definition}[theorem]{Definition}      
\newtheorem*{remark*}{Remark}  
\newtheorem{remark}[theorem]{Remark}   
\newtheorem{example}[theorem]{Example}
\newtheorem{questions}[theorem]{Problem}
\title{The relative Schinzel hypothesis}
\author{Arnaud Bodin}
\author{Pierre D\`ebes}
\author{Salah Najib}
\email{arnaud.bodin@univ-lille.fr}
\email{pierre.debes@univ-lille.fr}
\email{slhnajib@gmail.com}
\address{Universit\'e de Lille, CNRS, Laboratoire Paul Painlev\'e, 59000 Lille, France}
\address{Universit\'e de Lille, CNRS, Laboratoire Paul Painlev\'e, 59000 Lille, France}
\address{Laboratoire ATRES, Facult\'e Polydisciplinaire de Khouribga, Universit\'e Sultan Moulay Slimane, BP 145, Hay Ezzaytoune, 25000 Khouribga, Maroc.}
\subjclass[2010] {Primary 12E05, 12E30; Sec. 13Fxx, 11A05, 11A41}
\keywords{Primes, Polynomials, Schinzel Hypothesis, Hilbert's Irreducibility Theorem.}
\thanks{{\it Acknowledgment}. 
This work was supported in part by the Labex CEMPI  (ANR-11-LABX-0007-01) 
and by the ANR project ``LISA'' (ANR-17-CE40–0023-01). The first author thanks the University of British Columbia for his visit at PIMS}
\date{\today}
\begin{document}

\begin{abstract} 
The Schinzel Hypothesis is a conjecture about irreducible polynomials in one variable over the integers: under some standard condition, they should assume infinitely many prime values
at integers. We consider a relative version: if the polynomials are relati\-ve\-ly prime and no prime number divides all their values at integers, then they assume relatively prime values at at least one integer. We extend the question to all integral domains and prove it for a number of them: PIDs, UFDs containing an infinite field, polynomial rings over a UFD.
Applications include a new ``integral'' version of the Hilbert Irreducibility Theorem, for which the irreducibility conclusion is over the ring.
\end{abstract}

\maketitle


\section{Introduction} \label{sec:intro}

The so-called Schinzel Hypothesis (H) was stated in  \cite{schinzel-sierpinski}. Let $P_1, \ldots, P_s\in \Zz[y]$ be $s$ polynomials,
irreducible, of degree $\geq 1$ and satisfying this
{\it Assumption on Values}:
\vskip 1,5mm
 
 \noindent
 (AV1) {\it no prime number $p\in \Zz$ divides all integers $\prod_{i=1}^s P_i(m)$ with $m\in \Zz$.}
 \vskip 1,5mm
\noindent
Hypothesis (H) concludes that there are infinitely many $m\in \Zz$ 
such that $P_1(m),\ldots, P_s(m)$ are prime numbers. 
If true, the Schinzel Hypothesis would solve many classical problems in number theory:
the Twin Prime problem (take $(P_1,P_2)=(y,y+2)$), the infiniteness 
of primes of the form $m^2+1$ (take $P_1=y^2+1$), etc. 
However it is wide open except for one polynomial $P_1$ of degree one, in which case it
is the already profound Dirichlet theorem about primes in an arithmetic progression.
\vskip 0,5mm

We investigate the following {\it relative} version, which we raise for more 
general rings $Z$ than the ring $\Zz$ of integers. {\it Primes} of $Z$, \hbox{i.e.}
elements $p\in Z$ such that the ideal $pZ\subset Z$ is a prime ideal, replace prime numbers;
and we are interested in the weaker  {\it ``relatively prime''} conclusion that $P_1(m)$,\ldots,$P_s(m)$ 
have no common divisor in $Z$ (other than units)\footnote{From now on, we omit to say ``other than units'' 
 and consider that it is implicitly meant. We will also not use the somewhat ambiguous phrase ``relatively prime'', 
sometimes used for the stronger (though equivalent in PIDs) pro\-per\-ty ``generating the unit ideal''; we prefer 
to explicitly state the properties.}.

\begin{definition}
\label{th:schinzel-coprime}
Given an integral domain $Z$ of fraction field $Q$, say that the {\it relative Schinzel Hypothesis holds for $Z$} if the following is true. Let $P_1,\ldots,P_s \in Z[y]$ be $s\geq 2$ nonzero po\-ly\-no\-mials, with no common divisor in $Q[y]$ and 
satisfying this
  {\it Assumption on Values}: 
\vskip 1,5mm

\noindent
(AV2) {\it no prime $p$ of $Z$ divides all elements $P_1(m),\ldots, P_s(m)$ with $m\in Z$}.

\vskip 1,5mm

\noindent
Then there exists $m\in Z$ such that $P_1(m)$,\ldots,$P_s(m)$ have no common divisor in $Z$.
\end{definition}

All integral domains $Z$ which we know have the property are collected below. 

\begin{theorem} \label{thm:main_theorem} The relative Schinzel Hypothesis holds for $Z$ in the following cases:
 \vskip 0,3mm

\noindent
{\rm (a)} $Z$ is a Principal Ideal Domain {\rm (PID)}.
\vskip 0,5mm

\noindent
{\rm (b)}  $Z$ is a Unique Factorization Domain {\rm (UFD)} containing an infinite field.
\vskip 0,5mm

\noindent
{\rm (c)}  $Z$ is a polynomial ring $R[u_1,\ldots,u_r]$ $(r\geq 1)$ over a {\rm UFD} $R$. 
 \end{theorem}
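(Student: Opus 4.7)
The plan is to reduce all three cases by a common Bézout argument, then finish (a) by the Chinese Remainder Theorem and (b), (c) by a uniform iterative construction.

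\medskip

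\emph{Common reduction.} Since $\gcd_{Q[y]}(P_1,\ldots,P_s)=1$ and $Q[y]$ is a PID, a Bézout relation gives $V_i\in Z[y]$ and a nonzero $d\in Z$ with $\sum_i V_iP_i=d$. Specializing $y=m$ shows any common divisor of $P_1(m),\ldots,P_s(m)$ in $Z$ must divide $d$, so only the finitely many distinct prime divisors $\pi_1,\ldots,\pi_k$ of $d$ can obstruct coprimality. By hypothesis (AV2), for each $j$ one picks some $m_j\in Z$ and an index $i_j$ with $P_{i_j}(m_j)\not\equiv 0\pmod{\pi_j}$; in particular $\overline{P_{i_j}}\in (Z/\pi_j Z)[y]$ is a nonzero polynomial over the integral domain $Z/\pi_j Z$ and has at most $\deg P_{i_j}$ roots there. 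The task reduces to finding $m\in Z$ whose image in each $Z/\pi_j Z$ avoids these finitely many roots.

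\medskip

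\emph{Case (a).} Distinct primes of a PID are pairwise comaximal, so CRT yields $m\in Z$ with $m\equiv m_j\pmod{\pi_j}$ for every $j$; then $P_{i_j}(m)\equiv P_{i_j}(m_j)\not\equiv 0\pmod{\pi_j}$ as required.

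\medskip

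\emph{Cases (b) and (c).} I build $m$ iteratively, one $\pi_j$ at a time. Set $m^{(0)}=0$ and $m^{(j+1)}=m^{(j)}+(\pi_1\cdots\pi_j)\,t_{j+1}$ for a suitable $t_{j+1}\in Z$ (empty product equals $1$). The congruence $m^{(j+1)}\equiv m^{(j)}\pmod{\pi_l}$ for every $l\leq j$ preserves the good behaviour previously arranged at $\pi_1,\ldots,\pi_j$. To handle $\pi_{j+1}$ it suffices to find $t\in Z$ with $\overline{P_{i_{j+1}}}\bigl(m^{(j)}+\alpha\,t\bigr)\not\equiv 0\pmod{\pi_{j+1}}$, where $\alpha=\pi_1\cdots\pi_j$. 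Since distinct irreducibles of a UFD are pairwise non-associate, $\bar\alpha$ is a nonzero element of the integral domain $Z/\pi_{j+1}Z$; expanded in $t$, the polynomial has top coefficient $\bar a_n\bar\alpha^n$ with $\bar a_n$ the leading coefficient of $\overline{P_{i_{j+1}}}$, again nonzero, so it is of degree $n=\deg\overline{P_{i_{j+1}}}$ and has at most $n$ roots in $Z/\pi_{j+1}Z$. Hence a good $t_{j+1}$ exists as soon as $Z/\pi_{j+1}Z$ is infinite. In case (b), the infinite field $k$ embeds into $Z/\pi_{j+1}Z$ (its kernel $k\cap\pi_{j+1}Z$, being a proper ideal of the field $k$, is zero), giving infiniteness immediately. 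In case (c) one argues by cases on $\pi=\pi_{j+1}$: if $\pi\in R$ then $Z/\pi Z=(R/\pi R)[u_1,\ldots,u_r]$ is a polynomial ring in $r\geq 1$ variables, hence infinite; if $\pi$ is non-constant in the $u_i$, then $Z/\pi Z$ is a Noetherian integral domain of Krull dimension $\dim R+r-1\geq 1$ (and so infinite), except when $R$ is a field and $r=1$, a case already settled by (a) since $Z=R[u]$ is then a PID.

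\medskip

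\emph{Main obstacle.} The crux, apparent precisely in case (c) for a UFD that is not a PID, is that the multiplier $\bar\alpha=\overline{\pi_1\cdots\pi_j}$ need not be a \emph{unit} modulo $\pi_{j+1}$, so one cannot simply invoke CRT at each step. The iteration above sidesteps this by requiring only that $\bar\alpha$ be nonzero in the integral domain $Z/\pi_{j+1}Z$, together with the fact that a nonzero polynomial over such a domain has finitely many roots; infiniteness of $Z/\pi_{j+1}Z$ then supplies the missing element.
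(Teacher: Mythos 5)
Your proposal is correct in substance, and for case~(c) it is a genuinely different and substantially more elementary route than the paper's. The common B\'ezout reduction and case~(a) via CRT match the paper. For case~(b) the paper simply picks $m$ directly from the common infinite field $k$ (one shot, using that each $\overline{P_{i_p}}$ has at most $\deg P_{i_p}$ roots in $Z/pZ\supset k$), whereas you iterate; both work, but your iteration is precisely what extends to~(c). The real content is the step $m^{(j+1)} = m^{(j)} + (\pi_1\cdots\pi_j)\,t_{j+1}$: since in a UFD distinct primes cannot divide one another, $\overline{\pi_1\cdots\pi_j}$ is nonzero (though typically \emph{not} invertible) in the domain $Z/\pi_{j+1}Z$, so the substituted polynomial in $t$ is still nonzero of the same degree, hence has finitely many roots, and infiniteness of $Z/\pi_{j+1}Z$ supplies a good $t_{j+1}$. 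This entirely bypasses the paper's machinery for case~(c) --- the Schinzel Hypothesis over $R[\underline u]$ from \cite{BDN19}, Hilbertian rings, product formula --- which the paper invokes through Proposition~\ref{prop:main_general}. Your approach is thus a worthwhile simplification for Theorem~\ref{thm:main_theorem}(c) specifically (the heavy machinery remains needed for Theorems~\ref{thm-intro} and~\ref{thm-intro2}).

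There is however one genuine flaw in the justification of infiniteness. You assert that for $\pi$ non-constant in $\underline u$, ``$Z/\pi Z$ is a Noetherian integral domain of Krull dimension $\dim R + r - 1$''. A UFD $R$ need not be Noetherian, so $Z=R[\underline u]$ and $Z/\pi Z$ need not be either; Krull's Hauptidealsatz and the formula $\dim R[\underline u] = \dim R + r$ are then unavailable. (And even for a field $R$, $Z/\pi Z$ has dimension $0$, so the dimension criterion alone does not give infiniteness; your exception ``$R$ a field and $r=1$'' papers over this but for the wrong reason when $R$ is an infinite field.) The repair is elementary and is what the paper actually proves in Lemma~\ref{lem:prelim-proof-R[u]}(b): a UFD that is not a field is infinite (finite integral domains are fields), so either $R=\Ff_q$ (then $r=1$ falls back to (a) as you say, and $r\geq 2$ is handled by rewriting $Z=(R[u_1])[u_2,\ldots,u_r]$ with $R[u_1]$ infinite), or $R$ is infinite, and then for $\pi$ of positive degree $d$ in some $u_i$ the classes of $\sum_{l=0}^{d-1} p_l\hskip 1pt u_i^l$ ($p_l\in R$) are pairwise distinct modulo $\pi$, giving $Z/\pi Z$ infinite; for $\pi\in R$, $Z/\pi Z=(R/\pi R)[\underline u]$ is an infinite polynomial ring. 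With this substitution for the Noetherian/dimension step, your proof is complete.
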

 
Cases (a) and (b) are ``easy'' (see Section \ref{ssec:proofs_PID}); for $Z=\Zz$, a special case of the property with $s=2$  already appears in Schinzel's paper \cite[Lemme]{Sc59}.
Case (c) is more involved. The strategy rests on \cite{BDN19}, where the original Schinzel Hypothesis  itself  (suitably generalized) is shown for polynomial rings $R[u_1,\ldots,u_r]$ over a UFD $R$ with good ``Hilbertian properties''.  The Schinzel Hypothesis being stronger than the relative variant for these rings (Proposition \ref{prop:schinzel-implies-coprime}), the latter holds for them as well. Extending it to polynomial rings over an arbitrary UFD is the remaining challenge and the 
purpose of 
Section \ref{ssec:SH_more_general_rings}.

\begin{example}
The ring $\Zz$  is an example of (a) not satisfying (b) or (c). 
If $k$ is an infinite field, 
rings $k[[u_1,\ldots,u_r]]$ of formal power series in $r$ variables are examples of (b), but do not satisfy (a) or (c) if $r\geq 2$. 
More generally, regular local rings, which are UFDs by the Auslander-Buchsbaum 
theorem \cite{auslander-buchsbaum}, are examples of (b) if they contain an infinite field. Polynomial 
rings $\Zz[u_1,\ldots,u_r]$ are examples of (c) not satisfying (a) or (b). The trivial case that $Z$ is a field is contained 
in (a). 
\end{example}

In the three cases of Theorem \ref{thm:main_theorem}, the ring $Z$ is a UFD. It is the natural context for the relative Schinzel Hypothesis (gcd exist, primes are exactly the irreducible elements, Gauss's lemma is available, etc.).
See however Remark \ref{rem:final} for
a generalization of Theorem \ref{thm:main_theorem}(c) with $R$ not necessarily a UFD.

\begin{questions}
There remain rings of interest, even UFDs, for which the status of the relative Schinzel Hypothesis is unclear,  
e.g. $\Ff_q[[u_1,u_2]]$ and $\Zz_p[[u]]$. 
It would be nice to clarify these cases. More generally, finding an integral domain for which the relative Schinzel Hypothesis fails,
or, showing on the contrary that it always holds would be interesting. 
\end{questions}

\begin{remark}[{\rm on Assumption on Values}] \label{rem:AV2}
Assumption (AV2) is obviously necessary for the conclusion of the relative Schinzel Hypothesis to hold. (AV2) fails for example for $Z=\Zz$, $P_1(y) = y^p-y+p$, $P_2(y) = y^p-y$ with $p$ a prime number: all values $P_1(m), P_2(m)$ ($m\in \Zz$) are divisible by $p$.  (AV2) may also fail for polynomial rings $\Ff_q[u]$ in one variable over a finite field $\Ff_q$.  Take indeed $P_1(u,y)= y^q-y+u$, $P_2(u,y)= (y^q-y)^2+u$. 
For every $m(u)\in \Ff_q[u]$, the constant term of $m(u)^q - m(u)$ is zero, so $P_1(u,m(u))$ and $P_2(u,m(u))$ are divisible by $u$. Also note that a necessary condition for (AV2) is that no prime of $Z$ divides $P_1,\ldots,P_s$ (in $Z[y]$). The following statement is about the converse.
\vskip 1,5mm

\noindent
{\it Addendum to Theorem \ref{thm:main_theorem}}.  {\it In the setup of {\rm Definition \ref{th:schinzel-coprime}}, assume further that  $Z$ has this {\rm residue property}: its quotients by prime principal ideals are infinite. Then  {\rm (AV2)} is equivalent
to $P_1,\ldots,P_s$ having no common prime divisor in $Z$. }

\vskip 1mm

\noindent
Lemma \ref{lem:prelim-proof-R[u]} shows further that the residue pro\-per\-ty holds  
in case {\rm (b)} of {\rm Theorem \ref{thm:main_theorem}}, and in case {\rm (c)} unless $Z=\Ff_q[u]$.
Of course, the residue pro\-per\-ty fails for $Z=\Zz$.
\end{remark}

We note that Theorem \ref{thm:main_theorem} with $Z=\Zz$ (or  \cite[Lemme]{Sc59}), combined with the Dirichlet theorem, yields this {\it mod $N$ version of the Schinzel Hypothesis} (see Section \ref{ssec:proofs_PID}).

\begin{corollary}	
\label{cor:schinzel-modulo}
Let $P_1(y),\ldots, P_s(y)\in \Zz[y]$ be $s$ polynomials ($s\geq 1$) satisfying {\rm Assumption on Values (AV1)}. For every $N\in \Nn^*$, there exist in\-fi\-ni\-tely many 
$m\in \Zz$ such that $P_1(m),\ldots,P_s(m)$ are congruent modulo $N$ to prime numbers not dividing $N$.
\end{corollary}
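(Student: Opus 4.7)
The plan is to reduce the corollary, via Dirichlet's theorem, to producing infinitely many $m \in \Zz$ with $\gcd(P_i(m), N) = 1$ for every $i = 1, \ldots, s$, and then to invoke the relative Schinzel Hypothesis (Theorem~\ref{thm:main_theorem}(a) for the PID $\Zz$) applied to a suitable auxiliary pair of polynomials. The reformulation is routine: by Dirichlet's theorem on primes in arithmetic progressions, an integer $a$ is congruent modulo $N$ to a prime not dividing $N$ precisely when $\gcd(a, N) = 1$ (the forward direction is immediate, and for the reverse, Dirichlet exhibits a prime $q \equiv a \pmod{N}$ which is automatically coprime to $N$).

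The key step is to apply Theorem~\ref{thm:main_theorem}(a) in $\Zz[y]$ to the two polynomials $Q(y) := \prod_{i=1}^s P_i(y)$ and the constant $N \in \Zz$. They have no common divisor in $\Qq[y]$ since $N$ is a nonzero constant, hence a unit there. To verify Assumption (AV2) for $(Q, N)$, I would check that no prime $p$ divides both $N$ and every value $Q(m)$: if $p \nmid N$ there is nothing to prove, while if $p \mid N$, Assumption (AV1) applied to $P_1, \ldots, P_s$ furnishes some $m$ with $p \nmid \prod_i P_i(m) = Q(m)$. The theorem then yields $m \in \Zz$ with $\gcd(Q(m), N) = 1$, which is equivalent to $\gcd(P_i(m), N) = 1$ for each $i$. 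Passing from one such $m$ to infinitely many is immediate: since each $P_i$ has integer coefficients, $P_i(m + kN) \equiv P_i(m) \pmod{N}$ for every $k \in \Zz$, so every element of the arithmetic progression $m + N\Zz$ inherits the coprimality property.

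I do not anticipate any serious obstacle: the whole argument is essentially a repackaging of Theorem~\ref{thm:main_theorem}(a) combined with Dirichlet's theorem, and the only point requiring care is verifying that (AV1) for the $P_i$'s translates into (AV2) for the auxiliary pair $(Q, N)$, which is the short computation above. A final invocation of Dirichlet produces the required primes $q_i \equiv P_i(m) \pmod{N}$, one for each $i$.
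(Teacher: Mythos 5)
Your proof is correct and follows essentially the same route as the paper: both apply Theorem~\ref{thm:main_theorem}(a) to the auxiliary pair $\bigl(\prod_i P_i,\, N\bigr)$ and then finish with Dirichlet's theorem. You spell out the verification of (AV2) and the passage to infinitely many $m$ (via periodicity mod $N$) a bit more explicitly than the paper, but the substance is identical.
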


Schinzel's goal in \cite{Sc59} was to prove this  {\it mod $N$ version of 
 the {\it Goldbach} problem}: 
{for every $N\in \Nn^*$ and every even integer $2n$, infinitely many prime numbers $p,q$ exist such that $2n\equiv p+q$ modulo $N$} (from Corollary \ref{cor:schinzel-modulo}, just take $(P_1,P_2)=(y,\hskip 1pt {2n}-y)$). 

\vskip 1,5mm

The following applications of Theorem \ref{thm:main_theorem} are newer: they are {\it integral} 
versions of Hilbert's irreducibility theorem.

\begin{theorem} \label{thm-intro}
Let $P_1(t,y), \ldots, P_s(t,y)$ be $s$ irreducible polynomials in $\Zz[t,y]$ ($s\geq 1$), of degree at least $1$ in $y$
and satisfying this assumption: 
\vskip 1mm

\noindent
{\rm (AV3)} {\it no prime number $p$ divides all polynomials $\prod_{i=1}^s P_i(m,y)$ with $m\in \Zz$}.

\vskip 1mm

\noindent
Then infinitely many $m\in \Zz$ exist such that $P_1(m,y),\ldots, P_s(m,y)$ are irreducible in $\Zz[y]$.
\end{theorem}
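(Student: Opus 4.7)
My plan is to combine Hilbert's irreducibility theorem with the relative Schinzel hypothesis of Theorem \ref{thm:main_theorem}(a). First I observe that for $F(y) \in \Zz[y]$ of positive degree, irreducibility in $\Zz[y]$ is equivalent to primitivity in $\Zz[y]$ together with irreducibility in $\Qq[y]$. Since each $P_i$ is irreducible in $\Zz[t,y]$ with positive $y$-degree, no non-unit of $\Zz[t]$ can divide $P_i$; so the $y$-coefficients of $P_i$ have $\gcd_{\Zz[t]}=1$, and by Gauss's lemma $P_i$ is irreducible in $\Qq[t,y]$. The same argument gives that $Q(t,y) = \prod_{i=1}^s P_i(t,y) = \sum_{k=0}^D c_k(t) y^k$ is primitive over $\Zz[t]$, so the $c_k(t) \in \Zz[t]$ have no common divisor in $\Qq[t]$.

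Next I interpret (AV3) in terms of the $c_k$'s: a prime $p \in \Zz$ divides $Q(m,y) \in \Zz[y]$ iff it divides every $c_k(m)$, so (AV3) amounts to saying that no prime divides all elements of $\{c_k(m) : 0 \le k \le D,\ m \in \Zz\}$. This is exactly (AV2) for $c_0,\ldots,c_D \in \Zz[t]$ with $Z = \Zz$. To obtain infinitely many $m$ I would not cite Theorem \ref{thm:main_theorem}(a) as a black box but unpack its proof (Schinzel's \cite[Lemme]{Sc59}): pick $v_k \in \Zz[t]$ with $\sum_k v_k c_k = R \in \Zz \setminus \{0\}$, so $\gcd(c_k(m))_k$ always divides $R$; then combine (AV3) with the Chinese remainder theorem over the finite set of prime divisors of $R$ to exhibit an infinite arithmetic progression $m_0 + M\Zz$ on which $\gcd(c_k(m))_k = 1$. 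For such $m$, $Q(m,y)$ is primitive in $\Zz[y]$, hence (by Gauss) so is each $P_i(m,y)$.

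Finally, Hilbert's irreducibility theorem applied to each $P_i$ (irreducible in $\Qq[t,y]$ by the first paragraph) yields a Hilbert set $H \subset \Zz$ with thin complement such that $P_i(m,y)$ is irreducible in $\Qq[y]$ for all $i$ and all $m \in H$. In its standard sharper form Hilbert's theorem guarantees infinitely many $m \in H$ inside any prescribed infinite arithmetic progression. Intersecting with $m_0 + M\Zz$ produces infinitely many $m$ for which each $P_i(m,y)$ is simultaneously irreducible in $\Qq[y]$ and primitive in $\Zz[y]$, hence irreducible in $\Zz[y]$. The main subtlety is the promotion of the Schinzel conclusion from ``one $m$'' to ``an arithmetic progression of $m$'s'', which is why I open up the CRT argument rather than invoking Theorem \ref{thm:main_theorem}(a) as a black box.
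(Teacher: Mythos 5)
Your proof is correct, and it reaches the stated conclusion of Theorem \ref{thm-intro} by a genuinely lighter route than the paper's. The paper proves the stronger Theorem \ref{thm:HIT-main}: it produces an entire arithmetic progression $(am+b)_{m\in Z}$ contained in the integral Hilbert subset, over the ring of integers of any class-number-one number field and for a tuple $\underline y$ of variables. To get a full progression inside $H_Q(P_1,\dots,P_s)$ it invokes a nontrivial explicit result of \cite{acta2016} (together with Chebotarev to choose primes totally split in a controlling extension $L/\Qq$), and then intersects that progression with the progression that forces primitivity via the $\delta_i$-and-CRT argument. You instead invoke the classical fact that a Hilbert subset of $\Qq$ meets every arithmetic progression of integers in infinitely many points (a routine consequence of thin-set density bounds, or of the observation that $t\mapsto at+b$ carries Hilbert subsets to Hilbert subsets), which is all one needs for the weaker ``infinitely many $m$'' conclusion of Theorem \ref{thm-intro}. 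You also streamline the primitivity stage by working with the single product $Q=\prod_i P_i$ and its $y$-coefficients $c_k$, rather than keeping a separate $\delta_i$ for each $P_i$ as the paper does; the Gauss-lemma reduction (``$Q(m,y)$ primitive $\Rightarrow$ each $P_i(m,y)$ primitive'') is precisely what makes this collapse legitimate, and your unpacked CRT argument faithfully reproduces the paper's first stage. In short: same skeleton (primitivity progression $\cap$ Hilbert set), but you trade the paper's explicit AP-inside-Hilbert-set ingredient for the standard density form of HIT, which is enough here and makes the proof self-contained modulo classical HIT. One small point worth making explicit if you write this up: as in the paper, you should discard any $P_i$ of the shape $cy$ with $c=\pm1$ (for which $P_i(m,y)$ is irreducible for trivial reasons and which would otherwise degenerate the coefficient count), after which $Q$ genuinely has at least two nonzero $y$-coefficients in $\Zz[t]$ and the element $R$ is well defined.
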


The classical Hilbert irreducibility theorem concludes that for infinitely many $m\in \Zz$, $P_1(m,y),\ldots, P_s(m,y)$ are irreducible {\it in} $\Qq[y]$, instead of $\Zz[y]$. The irreducibility is here over the ring $\Zz$; equivalently, for each of the ``good'' $m$, $P_i(m,y)$ is irreducible in $\Qq[y]$ {\it and} is {\it primitive}, \hbox{i.e.} its coefficients have no common divisor in $\Zz$, $i=1,\ldots,s$. 

Assumption (AV3) is obviously necessary in Theorem \ref{thm-intro}. It fails for example for $s=1$ and $P_1 = (t^p-t)y+ (t^p-t+p)$ with $p$ a prime number. Excluded polynomials $P_1,\ldots,P_s$ are exactly those for which, for some prime $p$, one of the $P_i$ lies in the ideal $\langle t^p-t,p\rangle \subset \Zz[t,y]$. 

Note further that the (excluded) case $\deg_y(P_1)=\ldots = \deg_y(P_s)= 0$ 
of Theorem \ref{thm-intro}
is the original Schinzel Hypothesis.

\vskip 1mm

Next statement is a variant of Theorem \ref{thm-intro} with rings $Z$ as in Theorem \ref{thm:main_theorem}(c). 

\begin{theorem} \label{thm-intro2}
Let $Z$ be a polynomial ring $R[u_1,\ldots,u_r]$   ($r\geq 1$) over a {\rm UFD} $R$,  with $Z\not=\Ff_q[u_1]$. Let $P(t,y)$ be an irreducible polynomial in $Z[t,y]$ of degree at least $1$ in $y$.
Then infinitely many $m\in Z$ exist such that $P(m,y)$ is irreducible in $Z[y]$.
\end{theorem}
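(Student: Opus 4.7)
The plan is to reduce the irreducibility of $P(m,y)$ in $Z[y]$ to two conditions handled by separate tools. Since $Z$ is a UFD, Gauss's lemma asserts that $P(m,y)$ is irreducible in $Z[y]$ if and only if it is both primitive in $Z[y]$ and irreducible in $Q[y]$, where $Q=\mathrm{Frac}(Z)$. Writing $P(t,y)=\sum_{j=0}^{d}c_j(t)\,y^j$ with $d\ge 1$, the irreducibility of $P$ in $Z[t,y]$ forces $\gcd(c_0,\ldots,c_d)=1$ in $Z[t]$ (else a common factor of positive content would split $P$) and, via Gauss again, the irreducibility of $P$ in $Q[t,y]$.

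For irreducibility in $Q[y]$: since $r\ge 1$, the field $Q$ has positive transcendence degree over $\mathrm{Frac}(R)$ and is therefore Hilbertian. An integral form of Hilbert's irreducibility theorem yields an infinite ``Hilbert set'' $H\subset Z$ such that $P(m,y)$ is irreducible in $Q[y]$ for every $m\in H$.

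For primitivity in $Z[y]$: apply Theorem \ref{thm:main_theorem}(c) to the family of polynomials $c_0(t),\ldots,c_d(t)\in Z[t]$. Coprimality in $Q[t]$ is granted by the first paragraph, and (AV2) holds because the hypothesis $Z\ne\Ff_q[u_1]$ guarantees the residue property (Lemma \ref{lem:prelim-proof-R[u]} and the Addendum to Theorem \ref{thm:main_theorem}): were some prime $p$ of $Z$ to divide every $c_j(m)$, the infinitude of $Z/pZ$ would force $p\mid c_j$ in $Z[t]$ for each $j$, contradicting $\gcd(c_j)=1$. Theorem \ref{thm:main_theorem}(c) then produces some $m_0\in Z$ with coprime $c_0(m_0),\ldots,c_d(m_0)$, so that $P(m_0,y)$ is primitive in $Z[y]$.

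The main obstacle is to combine these two steps, since HIT provides an infinite Hilbert set $H$ while Theorem \ref{thm:main_theorem}(c) provides a priori only a single specialization $m_0$, possibly not in $H$. The plan is to strengthen the second step so that its output lies in $H$: the proof of Theorem \ref{thm:main_theorem}(c) rests on the Schinzel Hypothesis of \cite{BDN19} for polynomial rings with good Hilbertian properties, which produces, via a Hilbertian construction, infinitely many specializations $m\in Z$ with coprime $c_j(m)$. Arranging this construction to run inside the given Hilbert set $H$ (by adjoining the polynomial conditions defining $H$ as further Hilbertian constraints of the same type) should yield infinitely many $m\in H$ at which $P(m,y)$ is simultaneously primitive in $Z[y]$ and irreducible in $Q[y]$, and hence irreducible in $Z[y]$ by Gauss.
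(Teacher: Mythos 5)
Your decomposition is the right one: reduce to primitivity plus irreducibility over $Q=\mathrm{Frac}(Z)$, handle the former with the relative Schinzel Hypothesis and the latter with Hilbertianity. You also correctly identify the real obstacle — the Schinzel step gives one $m_0$, while Hilbertianity gives a Hilbert set $H$, and these must be made to intersect. But your proposed resolution of that obstacle is not an argument: ``arranging this construction to run inside $H$ \ldots should yield infinitely many $m$'' gestures at reopening the proof of Theorem~\ref{thm:main_theorem}(c) and grafting on the Hilbert-set constraints, without saying how. This is not obviously feasible as stated, because the $m$'s produced inside the proof of Theorem~\ref{thm:main_theorem}(c) (via Proposition~\ref{prop:main_general}) are specific polynomials $M(\underline\lambda^\ast,\underline u)$ chosen to make certain auxiliary polynomials irreducible; the conditions defining $H$ are of a different shape and would have to be woven into that $\underline\lambda^\ast$-selection, a nontrivial re-derivation you have not carried out.

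The paper's proof (Proposition~\ref{thm-intro2-general}) avoids reopening the Schinzel proof entirely, using a cleaner combination device that you are missing: the periodicity property of Remark~\ref{rem:periodic}. Fix $\delta \ne 0$ in $\bigl(\sum_i P_i(t)\,Z[t]\bigr)\cap Z$, where $P_1,\ldots,P_s$ are the coefficients in $Z[t]$ of $P(t,y)$. Then $\gcd\bigl(P_1(m),\ldots,P_s(m)\bigr)$ depends only on $m$ modulo $\delta$, so once the relative Schinzel Hypothesis hands you a single $m_0$ with $P(m_0,y)$ primitive, the entire arithmetic progression $m_0 + \ell\delta$ ($\ell\in Z$) inherits primitivity. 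Now substitute $t\mapsto m_0 + \delta t$: the polynomial $P(m_0+\delta t, y)$ is still irreducible in $Q[t,y]$, so the Hilbertian property of $Z$ gives infinitely many $\ell\in Z$ with $P(m_0+\ell\delta, y)$ irreducible in $Q[y]$. For those $\ell$, the specialization is simultaneously primitive and $Q$-irreducible, hence irreducible in $Z[y]$ by Gauss. In short: you do not need a stronger Schinzel theorem; you need the observation that one good $m_0$ automatically propagates along an arithmetic progression, which you can then feed to the Hilbert set.
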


Here assumption (AV3) is  automatically satisfied (by Lemma \ref{lem:prelim-proof-R[u]}), but only one polynomial is involved. Statements with several polynomials 
$P_1,\ldots,P_s$
{\it and} avoiding assumption (AV3) exist (for $Z=\Zz$ or $Z=R[u_1,\ldots,u_r]$), but then ``$m\in Z$'' should be replaced by ``$m(y)\in Z[y]$'': in\-finitely many $m(y)\in Z[y]$ exist such that $P_1(m(y),y), \ldots, P_s(m(y),y)$ are irreducible in $Z[y]$; see \cite[Theorem 1.1]{BDN19} and Section \ref{ssec:SH_more_general_rings} below. 

Theorem \ref{thm-intro2} does not extend to case (b) of Theorem \ref{thm:main_theorem}. Take $Z=k[[u_1,\ldots,u_r]]$ with $k$ an infinite field. The final sentence of Theorem \ref{thm-intro2} may fail for some polynomial $P(t,y)$. For $r=1$, it even does with $Z$ and $Z[y]$ replaced by $Q$ and $Q[y]$  \cite[Lemma 15.5.4]{FrJa},
and for $r\geq 2$, with $Z[y]$ replaced by $Q[y]$ (and ``$m\in Z$'' retained) \cite[Example 15.5.6]{FrJa}.

Theorem \ref{thm-intro} and Theorem \ref{thm-intro2} will be proved with $y$ replaced by a tuple $\underline y = (y_1,\ldots, y_n)$ of variables, thus offering irreducibility conclusions for infinitely many fibers of irreducible affine hypersurfaces over ${\rm Spec}\hskip 1pt Z[t]$.

\smallskip

The paper is organized as follows. Section \ref{sec:further} below develops the ``natural approach'', leading to cases (a) and (b) of Theorem \ref{thm:main_theorem} and its consequences, including Theorem \ref{thm-intro} (in the more precise form given in Theorem \ref{thm:HIT-main}).
In Section \ref{ssec:two_Schinzel}, the original Schinzel Hypothesis is extended to our general context and compared 
with the relative version; the Addendum to Theorem \ref{thm:main_theorem} is also proved.  
Section \ref{ssec:SH_more_general_rings} is devoted 
 to the polynomial ring situation and 
the second strategy, leading to case (c) of Theorem \ref{thm:main_theorem} and to Theorem \ref{thm-intro2}.

\section{The natural approach} \label{sec:further}

In Section \ref{ssec:delta}, we introduce a key parameter to the problem. We can then handle cases (a) and (b) 
of Theorem \ref{thm:main_theorem} in Section \ref{ssec:proofs_PID}. Theorem \ref{thm-intro} is proved in Section \ref{ssec:HIT}.

\subsection{The parameter $\delta$} \label{ssec:delta}
Let $Z$ be an integral domain with fraction field $Q$. Denote the set of invertible elements 
of $Z$ (also called units of $Z$) by $Z^\times$.

Let $P_1,\ldots, P_s\in Z[y]$ be $s$ nonzero polynomials  ($s\geq 2)$, with no common divisor in $Q[y]$; equivalently, they have no common root in an algebraic closure of $Q$.

As $Q[y]$ is a PID, we have $\sum_{i=1}^s   P_i\hskip 1pt Q[y]= Q[y]$. It follows that 
$(\sum_{i=1}^s   P_i\hskip 1pt Z[y]) \cap Z$ is a nonzero ideal of $Z$. Fix a nonzero element $\delta$ 
in this ideal. Note that if $s=2$, one can take $\delta$ to be the resultant $\rho={\rm Res}(P_1,P_2)$
(e.g. \cite[V \S 10]{Langoriginal}). In general, we have:
\vskip 1mm

\noindent
(1) \hskip 25mm $V_1(y)P_1(y)+\cdots+V_s(y)P_s(y)=\delta$
\vskip 1mm

\noindent
for some polynomials $V_1,\ldots,V_s \in Z[y]$. The same holds with $y=m\in Z$ and so, for every $m\in Z$,
every common divisor of $P_1(m),\ldots,P_s(m)$ divides $\delta$. 

If in addition $Z$ is a PID, the set $(\sum_{i=1}^s   P_i\hskip 1pt Z[y]) \cap Z$ is a principal ideal of $Z$. In this case 
we may and will pick $\delta$ to be a generator of this ideal. 

\begin{remark}[A periodicity property]  \label{rem:periodic}  {\it For every $m\in Z$, denote the set of common divisors of $P_1(m),\ldots,P_s(m)$
by ${\mathcal D}_m$. The function
\vskip 1mm

\centerline{$m\mapsto {\mathcal D}_m$ $(m\in Z)$} 
\vskip 1mm

\noindent

\noindent
has this periodicity property: for every $m, \ell \in Z$, ${\mathcal D}_m = {\mathcal D}_{m+\ell \delta}$. In particular, if $Z$ is a UFD and $d_m={\rm gcd}(P_1(m),\ldots,P_s(m))$, we have $d_m = d_{m+\ell \delta}$ ($m, \ell \in Z$).}
\vskip 1mm

 This is observed by  Frenkel and Pelik\'{a}n in \cite{FP} in the special case ($s=2$, $Z=\Zz$) with $\delta$ replaced by  the resultant $\rho={\rm Res}(P_1,P_2)$. We adjust their argument\footnote{Our parameter $\delta$ seems more appropriate in the situation of $s>2$ polynomials, for which resultants are not defined. Furthermore, even for $s=2$, our version is a slight improvement since 
$\delta$ can be any element of the ideal $(\sum_{i=1}^s   P_i\hskip 1pt Z[y]) \cap Z$, in particular $\rho$. 
If $Z$ is a PID, $\delta$ divides $\rho$, and $\delta \not=\rho$
in general.
}.

For every $m,\ell \in Z$, we have $P_i(m+\ell \delta) \equiv P_i(m) \pmod {\delta}$, $i=1,\ldots,s$. It follows that the common divisors of $P_1(m),\ldots,P_s(m),\delta$ are the same as those of $P_1(m+\ell \delta)$,$\ldots$, $P_s(m+\ell \delta), \delta$. As both the common divisors of $P_1(m),\ldots,P_s(m)$ and those of $P_1(m+\ell \delta)$, $\ldots$, $P_s(m+\ell \delta)$ divide $\delta$, the conclusion ${\mathcal D}_m = {\mathcal D}_{m+\ell \delta}$ follows.
\end{remark}

\begin{remark}[{\rm on the set of ``good'' $m$}] \label{ex:example2} \label{rem:small_density}
It follows from the periodicity property 
 that the set, say ${\mathcal S}$, of $m\in Z$ such that $P_1(m)$,\ldots,$P_s(m)$ have no common divisor in $Z$ is infinite if it is nonempty, and if $Z$ is infinite. 
The set ${\mathcal S}$ can nevertheless be of arbitrarily small density. Take $Z=\Zz$, $P_1(y) = y$, $P_2(y)=y+\Pi_h$, with $\Pi_h$ ($h\in \Nn$) the product of primes in $[1,h]$. The set ${\mathcal S}$ consists of the integers which are prime to $\Pi_h$. Its density is: 

\vskip 0,3mm
\centerline{$\displaystyle \frac{\varphi(\Pi_h)}{\Pi_h} = \left(1-\frac{1}{2}\right) \cdots \left(1-\frac{1}{p_h}\right)$}
\vskip 0,3mm

\noindent
where $p_h$ is the $h$-th prime number and $\varphi$ is the Euler function. The sequence $\varphi(\Pi_h)/\Pi_h$
tends to $0$ when $h\rightarrow \infty$ (since the series $\sum_{h=0}^\infty 1/p_h$ diverges).
\end{remark}

\subsection{The ``easy'' part of Theorem \ref{thm:main_theorem}} \label{ssec:proofs_PID}
The proof of cases (a) and (b) of  Theorem \ref{thm:main_theorem} rests on a common idea, 
which we develop  below.

\begin{proof}[Proof of cases {\rm (a)} and {\rm (b)} of Theorem \ref{thm:main_theorem}] 
Let $Z$ be a UFD of fraction field $Q$ and let $P_1,\ldots, P_s\in Z[y]$ be some nonzero polynomials, 
with no common divisor in $Q[y]$ and satisfying Assumption on Values {\rm (AV2)}. The main point is to construct 
an element $m\in Z$ such that no prime divisor of $\delta$ divides 
${\rm gcd}(P_1(m),\ldots,P_s(m))$. The conclusion then readily follows: as we know that
 this gcd divides $\delta$, it is $1$, that is: $P_1(m),\ldots,P_s(m)$ have no common divisor in $Z$.
The last two paragraphs of the proof explain how to construct such an element $m\in Z$ in both cases (a) and (b) 
of Theorem \ref{thm:main_theorem}.
\vskip 1mm

\noindent
(a) Assume that $Z$ is a PID. From Assumption (AV2), for every prime divisor $p$ of $\delta$, there 
exists $m_p\in Z$ and $i_p\in \{1,\ldots,s\}$ such that $p$ 
does not divide $P_{i_p}(m_p)$. The Chinese Remainder Theorem provides 
an element $m\in Z$ such that $m\equiv m_p \pmod{p}$ for every prime divisor $p$ of $\delta$. 
For these primes $p$, we have $P_{i_p}(m)\equiv P_{i_p}(m_p) \pmod{p}$.
Hence no prime divisor $p$ of $\delta$ divides ${\rm gcd}(P_1(m),\ldots,P_s(m))$. 
\vskip 1mm

\noindent
(b) Assume that $Z$ is a UFD containing an infinite field $k$. From Assumption (AV2), $P_1,\ldots, P_s$ 
have no common prime divisor in $Z$. 
Thus, for every prime divisor $p$ of $\delta$ in $Z$, there is an index $i_p\in \{1,\ldots,s\}$ such that $\overline P_{i_p} \not= 0$ in $(Z/pZ)[y]$; here we denote by $\overline P_{i_p}$ the polynomial obtained from $P_{i_p}$ by reducing the coefficients modulo $p$. As $Z$ is a UFD, the set of prime divisors $p$ of $\delta$ in $Z$ is finite (modulo $Z^\times$). The list of corresponding polynomials $\overline P_{i_p}$ is finite as well.
Since the field $k\subset Z$ is infinite, one can find infinitely many elements $m\in k$ such that $\overline P_{i_p}(\overline m)\not= 0$ in $Z/pZ$ for every prime divisor $p$ of $\delta$; here we use the injectivity of the map $k \rightarrow Z/pZ$; distinct elements in $k$ remain distinct in $Z/pZ$. Thus, for these $m$, no prime divisor $p$ of $\delta$ divides 
$P_{i_p}(m)$, and so no prime divisor $p$ of $\delta$ divides ${\rm gcd}(P_1(m),\ldots,P_s(m))$. 
\end{proof}

\begin{remark}[A weak form of the relative Schinzel hypothesis]\label{rem:direct_proof} When $Z$ is an arbitrary 
 UFD,
  the Artin-Whaples approximation theorem  \cite[XII \S 1]{Langoriginal} can always be used in the proof above, 
and provides an element $m\in Q$ such that $v_p(m-m_p)>0$ for each prime divisor $p$ of $\delta$, where $v_p$ 
 is the standard discrete 
valuation on $Q$ associated to the prime $p$.
We then obtain the following, under Assumption (AV2):
\vskip 1mm

\noindent
(2) {\it Let $S_\delta \subset Z$ be the multiplicative subset of elements of $Z$ that 
are prime to $\delta$. Then there exists an element $m$ in the fraction ring $S_{\delta}^{-1} Z$ such that 
$P_1(m),\ldots,P_s(m)$ have no common divisor in $S_{\delta}^{-1} Z$.}
\end{remark}

\begin{remark}[A stronger property in the PID case] \label{rem:ref_BDN20}
If $Z$ is a PID, the following property, stronger than the relative Schinzel Hypothesis, can be shown. Consider the set ${\mathcal D}^\ast$ of all $d_m= {\rm gcd}(P_1(m),\ldots,P_s(m))$ with $m\in Z$. The set ${\mathcal D}^\ast$, which is finite modulo $Z^\times$ (since all $d_m$ divide $\delta$), is {\it stable under gcd} \hbox{(i.e.} if $d,d' \in \mathcal{D}^\ast$ then $\gcd(d,d')\in \mathcal{D}^\ast$). Thus the gcd $d^\ast$ of all $d_m$ ($m\in Z$) is in ${\mathcal D}^\ast$. But $d^\ast$ is also the gcd of all values $P_1(m),\ldots,P_s(m)$ with $m\in Z$. So under Assumption (AV2), we have $d^\ast=1$, and  \hbox{``$d^\ast \in {\mathcal D}^\ast$''}  means that there exists $m\in Z$ such that $P_1(m),\ldots,P_s(m)$ have no common divisor in $Z$.
The stability property is detailed in \cite{BDN19b}, where other  results on  ${\mathcal D}^\ast$, e.g. an upper bound for the element $d^\ast$, are established.
\end{remark}

Finally we prove our mod $N$ version of the Schinzel Hypothesis (Corollary \ref{cor:schinzel-modulo}). 

\begin{proof}[Proof of Corollary \ref{cor:schinzel-modulo}] Let $P_1(y),\ldots, P_s(y)\in \Zz[y]$ be $s$ polynomials ($s\geq 1$) and $N>0$ be an integer. 
Assume that no prime number divides $\prod_{i=1}^s P_i(m)$ for every $m\in \Zz$.  Consider the two polynomials ${\mathcal P}_1(y) = \prod_{i=1,\ldots,s}P_i(y)$ and ${\mathcal P}_2(y) = N$. They are nonzero, have no common divisor in $\Qq[y]$, and satisfy Assumption (AV2). It follows from Theorem \ref{thm:main_theorem}(a) (or  \cite[Lemme]{Sc59}) that there exist infinitely many $m\in \Zz$ such that ${\mathcal P}_1(m) = \prod_{i=1,\ldots,s}P_i(m)$ and $N$ have no common divisor. In particular, for such $m$, $P_i(m)$ and $N$ have no common divisor for each $i=1,\ldots,s$, and so by the Dirichlet theorem, there exist prime numbers $p_i$ not dividing $N$ and such that  $P_i(m) \equiv p_i \pmod{N}$. \end{proof}

\subsection{An integral version of Hilbert's irreducibility theorem} \label{ssec:HIT} Theorem \ref{thm:HIT-main} below is a more general version of Theorem \ref{thm-intro}: $y$ is replaced by an $n$-tuple $\underline y = (y_1,\ldots,y_n)$ of variables ($n\geq 1$) and $\Zz$ is replaced by the ring of integers $Z$ of a number field $Q$ of class number $1$. The proof provides an explicit version: the ``good'' integers $m$ can be taken to be all the terms of an arithmetic progression $(am+b)_{m\in \Zz}$ with $a$ precisely described.

Let $Z$ be the ring of integers of a number field $Q$. Given $s$ polynomials $P_1(t,\underline y), \ldots, P_s(t,\underline y)$ irreducible in $Z[t,\underline y]$  ($s\geq 1$) and of degree $\geq 1$ in $\underline y$,
consider the set

\vskip 1,5mm

\centerline{$\displaystyle H_Z(P_1,\ldots,P_s)=\left\{m\in Z \hskip 1mm \left | 
\begin{matrix}
\hskip 1mm P_i(m,\underline y)\hskip 1mm  \hbox{\rm irreducible in } Z[\underline y] \cr
\hskip 1mm \hbox{\rm for each }i=1,\ldots,s,  \hfill \cr
\end{matrix} \right.
\right\}$.}

\vskip 1,5mm
\noindent
Call $H_Z(P_1,\ldots,P_s)$ an {\it integral Hilbert subset of} $Z$. In the classical definition,  the ring $Z$ is replaced by the field $Q$: the {\it Hilbert subset} $H_Q(P_1,\ldots,P_s)$ is the set of all $m\in Q$ such that $P_1(m,\underline y), \ldots, P_s(m,\underline y)$ are irreducible in $Q[\underline y]$. For $m$ to be in $H_Z(P_1,\ldots,P_s)$, $m$ has to be in $Z$ and $P_1(m,\underline y), \ldots, P_s(m,\underline y)$ should be irreducible in $Z[\underline y]$. If $Q$ is of class number $1$ and so $Z$ is a UFD, the latter is equivalent to $P_1(m,\underline y), \ldots, P_s(m,\underline y)$ being irreducible in $Q[\underline y]$ {\it and primitive \hbox{w.r.t. Z}}, \hbox{i.e.} its coefficients have no common divisor in $Z$.

\begin{theorem} \label{thm:HIT-main}
Assume that the number field $Q$ is of class number $1$ (e.g. $Q=\Qq$). Let $H_Z(P_1,\ldots,P_s)$ be an integral subset of $Z$. Assume in addition that, for $P= P_1\cdots P_s$, 
\vskip 1mm

\noindent
{\rm (AV3)} {\it no prime $p$ of $Z$ divides all polynomials $P(m,\underline y)$ with $m\in Z$}.

\vskip 1mm

\noindent
Then there exist $a,b \in Z$, $a\not=0$, such that $H_Z(P_1,\ldots,P_s)$ contains the arithmetic progression $(am+b)_{m\in Z}$.
\end{theorem}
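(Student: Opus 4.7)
The plan combines two ingredients: (i) the primitivity statement supplied by the relative Schinzel property in the PID case (Theorem \ref{thm:main_theorem}(a)), strengthened into an arithmetic-progression form by the periodicity of Remark \ref{rem:periodic}, and (ii) a strong form of Hilbert's Irreducibility Theorem over the number field $Q$, producing an entire arithmetic progression on which each $P_i(\cdot,\underline y)$ specializes to an irreducible of $Q[\underline y]$. Since $Z$ is a UFD, irreducibility of a polynomial in $Z[\underline y]$ is equivalent, by Gauss's lemma, to primitivity over $Z$ combined with irreducibility in $Q[\underline y]$; it therefore suffices to ensure both conditions along a common arithmetic progression, which I then extract via the Chinese Remainder Theorem.

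\emph{Step 1 (primitivity).} Write $P_i(t,\underline y)=\sum_\alpha a_{i,\alpha}(t)\,\underline y^{\alpha}$ with $a_{i,\alpha}\in Z[t]$. Since $P_i$ is irreducible in $Z[t,\underline y]$ with positive degree in $\underline y$, the family $\{a_{i,\alpha}(t)\}_\alpha$ has no common divisor in $Z[t]$, and so none in $Q[t]$. Assumption (AV3) translates, for each $i$, into (AV2) for this family in the PID $Z$: a prime $p$ dividing every $a_{i,\alpha}(m)$ for all $m$ would divide $P_i(m,\underline y)$ as a polynomial for every $m$, contradicting (AV3). Let $\delta_i\in Z$ be a generator of the principal ideal $\bigl(\sum_\alpha a_{i,\alpha}(t)\,Z[t]\bigr)\cap Z$ and put $\delta=\prod_i \delta_i$. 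For each prime $p\mid\delta$, (AV3) supplies $m_p\in Z$ with $p\nmid P(m_p,\underline y)$, so that for every $i$ some $a_{i,\alpha}(m_p)$ is nonzero modulo $p$. By the Chinese Remainder Theorem choose $b_1\in Z$ with $b_1\equiv m_p\pmod p$ for every prime $p\mid\delta$. The periodicity of Remark \ref{rem:periodic} applied to each family (with modulus $\delta_i$ dividing $\delta$) shows that $\gcd\{a_{i,\alpha}(\delta\ell+b_1)\}_\alpha=\gcd\{a_{i,\alpha}(b_1)\}_\alpha$ for every $\ell\in Z$; this gcd divides $\delta_i$ yet is coprime to every prime divisor of $\delta_i$, so it is a unit. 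Hence each $P_i(\delta\ell+b_1,\underline y)$ is primitive over $Z$ for all $\ell\in Z$.

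\emph{Step 2 (irreducibility in $Q[\underline y]$).} The map $t\mapsto \delta t+b_1$ is a $Q$-automorphism of $Q[t,\underline y]$, hence $F_i(t,\underline y):=P_i(\delta t+b_1,\underline y)$ remains irreducible in $Q[t,\underline y]$ of positive degree in $\underline y$. I now invoke the following strong form of Hilbert's Irreducibility Theorem: there exist $a_2,b_2 \in Z$ with $a_2\neq 0$ such that $F_i(a_2m+b_2,\underline y)$ is irreducible in $Q[\underline y]$ for all $m\in Z$ and all $i$. Indeed, the ``bad'' set of $m$ for which some $F_i(m,\underline y)$ factors is thin in the sense of Serre and is contained in finitely many images $\phi_k(V_k(Q))$ of covers $\phi_k:V_k\to\mathbb{A}^1_Q$ of degree $\geq 2$. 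A Chebotarev/Lang--Weil argument provides, for each $k$, a prime $\pi_k$ of $Z$ (coprime to $\delta$) whose residue-field image $\phi_k(V_k(Z/\pi_k))\subsetneq Z/\pi_k$ misses some class $c_k$. The Chinese Remainder Theorem over $\prod_k\pi_k$ then supplies $b_2\in Z$ with $b_2\equiv c_k\pmod{\pi_k}$ for every $k$; taking $a_2=\prod_k\pi_k$, every value $a_2m+b_2$ is congruent to a missed class modulo each $\pi_k$, and so cannot lie in any $\phi_k(V_k(Z))$.

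Setting $a:=\delta a_2$ and $b:=\delta b_2+b_1$, we have $am+b=\delta(a_2m+b_2)+b_1$ for every $m\in Z$, so Step 1 yields primitivity of $P_i(am+b,\underline y)$ over $Z$ and Step 2 yields irreducibility in $Q[\underline y]$. By Gauss's lemma, each $P_i(am+b,\underline y)$ is irreducible in $Z[\underline y]$, proving $(am+b)_{m\in Z}\subset H_Z(P_1,\ldots,P_s)$. The main obstacle is Step 2: the classical HIT over $Q$ only delivers infinitely many good $m$, not an entire arithmetic progression, so passage to a progression rests on the Chebotarev/Lang--Weil analysis of the finitely many thin-set pieces cutting out the reducibility loci of the $F_i$. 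The role of (AV3) is specifically to make Step 1's CRT feasible; Step 2 does not need it.
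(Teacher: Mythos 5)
Your proof follows the same two-stage structure as the paper's: handle primitivity of $P_i(m,\underline y)$ over $Z$ along an arithmetic progression via the parameter $\delta_i$ and the Chinese Remainder Theorem (exactly as the paper's set $P_Z(P_1,\ldots,P_s)$), then combine with a strong form of Hilbert Irreducibility over $Q$ that places an entire arithmetic progression inside $H_Q(P_1,\ldots,P_s)$, and finally intersect via CRT. Where you differ from the paper is only in Step 2: the paper cites \cite[Theorem 3.1 and Addendum 3.2]{acta2016} for the arithmetic-progression form of HIT, whereas you sketch a proof of it from scratch via Serre thin sets and Chebotarev/Lang--Weil. Your sketch has the right mechanism but glosses over two standard-but-not-automatic points: (i) a thin subset of $\mathbb{A}^1(Q)$ is a finite union of images of degree $\geq 2$ covers \emph{plus a finite set}, and your congruence construction does not address the finite piece (fixable by augmenting $a_2$ with one more prime per exceptional value and shifting $b_2$ accordingly); (ii) to conclude $a_2m+b_2\notin\phi_k(V_k(Q))$ from a residue congruence modulo $\pi_k$, you need the preimage point on $V_k$ to reduce modulo $\pi_k$, which requires a good-reduction/properness argument for an integral model of $V_k$ that your proposal leaves implicit. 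Neither issue is fatal, and the paper's own appeal to \cite{acta2016} is doing exactly this work behind the scenes; but as written, your Step~2 asserts more than it proves. Also note the paper first discards the degenerate $P_i = cy$ with $c\in Z^\times$ before setting up the $\delta_i$; your construction handles this silently (you would get $\delta_i$ a unit), so this is harmless but worth flagging.
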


\begin{proof} Let $P_1,\ldots,P_s \in Z[t,\underline y]$ be as in the statement. One may assume that none of them is of the form $P(t,y)= c y$ with $c$ invertible in $Z$; such a polynomial satisfies ``$P(m,y) = cy$ irreducible in $Z[y]$ for every $m\in Z$'' and hence can be removed with no loss from $P_1,\ldots,P_s$.

Consider the set
\vskip 1,5mm

\centerline{$\displaystyle P_Z(P_1,\ldots,P_s)=\left\{m\in Z \hskip 1mm \left | 
\begin{matrix}
\hskip 1mm P_i(m,\underline y)\hskip 1mm  \hbox{\rm primitive \hbox{w.r.t. $Z$}} \cr
\hskip 1mm \hbox{\rm for each }i=1,\ldots,s  \hfill \cr
\end{matrix} \right.
\right\}$}

\vskip 1,5mm
\noindent
Clearly we have $H_Z(P_1,\ldots,P_s) = H_Q (P_1,\ldots,P_s) \cap P_Z(P_1,\ldots,P_s)$.

Fix $i=1,\ldots,s$. As the polynomial $P_i(t,\underline y) $ is irreducible in $Z[t,\underline y]$ and of degree $\geq 1$ in $\underline y$, its nonzero coefficients $P_{i}(t) \in Z[t]$ have no common factor in $Q[t]$. Due to the initial reduction, there are at least two such nonzero coefficients $P_{ij}(t)$. Denote by $\delta_i$ the parameter defined in Section \ref{ssec:delta} associated with these polynomials $P_{ij}(t)$. 

The first stage, which concerns $P_Z(P_1,\ldots,P_s)$, re-uses the approach to Theorem \ref{thm:main_theorem}(a) (see Section \ref{ssec:proofs_PID}). 
From Assumption (AV3), for every prime divisor $p$ of $\delta_1 \cdots \delta_s$, there 
exists $m_p\in Z$ such that $p$ does not divide $P(m_p,\underline y)$. 
Consequently, the prime $p$ does not divide any of the polynomials $P_i(m_p,\underline y)$, i.e. $p$ does not divide some coefficient $P_{ij_i}(m_p)$, $i=1,\ldots,s$. Let $a_0$ be the product of primes dividing $\delta_1 \cdots \delta_s$. The Chinese Remainder Theorem provides an element $b_0\in Z$ such that $b_0 \equiv m_p \pmod{p}$ for every prime divisor $p$ of $a_0$. This property follows: for every $t^\ast \in Z$ such that $t^\ast \equiv b_0 \pmod{a_0}$, no prime divisor of $a_0$ divides any of the polynomials $P_1(t^\ast,\underline y),\ldots,P_s(t^\ast,\underline y)$. But since for each $i=1,\ldots,s$, every prime divisor $p\in Z$ of $P_i(t^\ast,\underline y)\in Z[\underline y]$ must divide $\delta_i$, we obtain that each of the polynomials $P_1(t^\ast,\underline y),\ldots,P_s(t^\ast,\underline y)$ is primitive \hbox{w.r.t.} $Z$. We have thus shown that the set $P_Z(P_1,\ldots,P_s)$ contains the arithmetic progression $(a_0m+b_0)_{m\in Z}$.
\vskip 1mm

For the second stage, we first consider the special case that $\underline y$ is a single variable $y$.

\noindent
{\bf{\textit{1st case:}}} $n=1$.
The second stage combines the conclusion on $P_Z(P_1,\ldots,P_s)$ with Theorem 3.1 from \cite{acta2016}. This result gives some integers $N$, $B$, $C$ and a finite extension $L/\Qq$ with this property:
\vskip 1mm

\noindent
(3) {\it If $p_1, . . . , p_N$ are $N$ distinct prime numbers satisfying: $p_i$ does not divide $B$, $p_i \geq C$ and $p_i$ is totally split in $L/\Qq$ ($i = 1,...,N$), then for $a_1=p_1 \cdots p_N$, there exists $b_1\in \Zz$ such that the Hilbert subset $H_Q (P_1,\ldots,P_s)$ contains the arithmetic progression $(a_1m+b_1)_{m\in \Zz}$.}
\vskip 1mm

\noindent
(In \cite{acta2016}, Theorem 3.1 is stated for a Hilbert subset $H_Q(P_1)$ corresponding to a single polynomial $P_1$, also assumed to be irreducible in $\overline \Qq[t,y]$, but Remark 3.4 there extends the statement to a general Hilbert subset. The constants $N,B,C$ and the extension $L/\Qq$ are explicitly described in \cite[Addendum 3.2]{acta2016}).
\noindent

Pick $p_1, \ldots, p_N$ as indicated in (3) and not dividing $\delta_1 \cdots \delta_s$ (condition regarding $L/\Qq$ can be guaranteed thanks to the Chebotarev Theorem). Let then $a_1, b_1$ be the two integers given by (3). Since $a_0$ and $a_1$ have no common factor, there exists an arithmetic progression $(am+b)_{m\in \Zz}$ with $a=a_0a_1$ contained in both the arithmetic progressions $(a_0m+b_0)_{m\in \Zz}$ and $(a_1m+b_1)_{m\in \Zz}$ (again by the Chinese Remainder Theorem). By construction, the arithmetic progression $(am+b)_{m\in \Zz}$ is hence contained in both $H_Q (P_1,\ldots,P_s)$ and $P_Z(P_1,\ldots,P_s)$, and so in $H_Z(P_1,\ldots,P_s)$.
\vskip 1mm

\noindent
{\bf{\textit{General case: $n\geq 1$.}}} We use a classical reduction from the Hilbert subset theory. 
From \cite[Lemma 12.1.3]{FrJa}, there is a finite set ${\mathcal S}$ of irreducible polynomials in $Q[t,y]$, of degree at least $1$ in $y$ and such that 
\vskip 1mm

\centerline{${H}_{Q}(P_1,\ldots,P_s) \supset H_{Q}({\mathcal S})$, up to some finite subset of $Q$.}
\vskip 1mm

\noindent
Hence, property (3), which holds for $H_Q({\mathcal S})$, holds for the Hilbert subset 
$H_Q(P_1,\ldots,P_s)$ as well. The proof can then be concluded as in first case.
\end{proof}

\section{The two Schinzel Hypotheses} \label{ssec:two_Schinzel} 

We generalize the original Schinzel Hypothesis to
more general rings (Definition \ref{def:schinzel-general}). Proposition \ref{prop:schinzel-implies-coprime} then shows that this generalization is stronger than the relative version, under suitable assumptions. The intermediate Lemma \ref{lem:prelim-proof-R[u]} shows that the Assumptions on Values involved in the two Schinzel Hypotheses are automatically satisfied under some standard condition. This proves in particular the Addendum to Theorem \ref{thm:main_theorem}.

\begin{definition}
\label{def:schinzel-general} 
Let $Z$ be an integral domain with fraction field $Q$. We say that the {\it Schinzel Hypothesis holds for $Z$} if the following is true. Let $P_1,\ldots,P_s \in Z[y]$ be polynomials ($s\geq 1$), irreducible in $Q[y]$, distinct modulo $Q^\times$ and satisfying this  {\it Assumption on Values}: 
\vskip 1mm

\noindent
(AV1) {\it no prime $p$ of $Z$ divides all elements $\prod_{i=1}^s P_i(m)$, $m\in Z$}.
\vskip 1mm

\noindent
Then given any finite set $S$ of primes of $Z$, there is $m\in Z$ such that $P_1(m),\ldots, P_s(m)$ are primes of $Z$, pairwise distinct modulo $Z^\times$, and distinct from every prime in $S$ modulo $Z^\times$.
\end{definition}

Definition \ref{def:schinzel-general} indeed generalizes the original Schinzel Hypothesis (H) (as recalled in Section \ref{sec:intro}). The condition involving $Z^\times$ in the conclusion here takes into account the fact that $Z^\times$ may be infinite in general.

\begin{remark} Clearly the ring $Z$ should have infinitely many distinct primes modulo $Z^\times$ for the Schinzel Hypothesis to hold; hence it does not hold for fields, local rings, etc. Even when this necessary condition is satisfied, the Schinzel Hypothesis does not hold in general: it is known to fail for $Z=\Ff_2[u]$. Take indeed $P_1(u,y) = y^8 +u^3\in \Ff_2[u,y]$; $P_1$ is irreducible in $\Ff_2[u,y]$, satisfies Assumption (AV1) (since for example $P_1(u,0) = u^3$ and $P_1(u,1)=u^3+1$ have no common divisor) and yet, from an example of Swan \cite[pp.1102-1103]{swan}, $m(u)^8 + u^3$ is reducible (hence not prime) in $\Ff_2[u]$ for every $m(u)\in \Ff_2[u]$. 
\end{remark}

\begin{lemma} \label{lem:prelim-proof-R[u]} Let $Z$ be an integral domain and $P_1,\ldots,P_s\in Z[y]$ some nonzero polynomials.

\noindent
{\rm (a)} Assume that the quotients of $Z$ by prime principal ideals are infinite. 
Then: 
\vskip 0,4mm

\noindent
{\rm (a-1)} {\rm Assumption on Values} {\rm (AV1)} holds (with $s\geq 1$) if and only if each of the polynomials $P_1,\ldots,P_s$ has no prime divisor in $Z$.
\vskip 0,4mm

\noindent
{\rm (a-2)} {\rm Assumption on Values} {\rm (AV2)} (with $s\geq 2$) holds if and only if the polynomials $P_1,\ldots,P_s$ have
no common prime divisor in $Z$.

\vskip 1mm

\noindent
{\rm (b)} If $Z=R[u_1,\ldots,u_r]$ is a polynomial ring over an integral domain $R$, and if either 
$R$ is infinite or if $r\geq 2$, then the quotients of $Z$ by prime principal ideals are infinite. The
same conclusion holds if $Z$ is an integral domain containing an infinite field.
\end{lemma}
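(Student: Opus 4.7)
For (a-1) and (a-2), the two directions of each equivalence correspond to elementary reduction-mod-$p$ arguments. The ``only if'' directions are immediate: a prime $p$ dividing some $P_i$ (resp.~all of the $P_i$) in $Z[y]$ forces $p\mid \prod_j P_j(m)$ (resp.~$p\mid P_j(m)$ for every $j$) at every $m\in Z$, contradicting (AV1) (resp.~(AV2)). For the converses I fix a prime $p$ of $Z$ and look at the reductions $\overline{P_i}\in (Z/pZ)[y]$. In case (a-1), each $\overline{P_i}$ is nonzero, so $\prod_i \overline{P_i}$ is a nonzero polynomial in $(Z/pZ)[y]$ with only finitely many roots in the integral domain $Z/pZ$; since $Z/pZ$ is infinite by the standing hypothesis, some $\overline m$ avoids all these roots, and a lift $m\in Z$ yields $p\nmid P_i(m)$ for every $i$, hence $p\nmid \prod_i P_i(m)$. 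In case (a-2), I pick one index $i_0$ with $p\nmid P_{i_0}$, making $\overline{P_{i_0}}$ a nonzero polynomial with finitely many roots, and the same root-avoidance argument produces $m\in Z$ with $p\nmid P_{i_0}(m)$.

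For (b), let $Z=R[u_1,\ldots,u_r]$ and $p$ a (nonzero) prime of $Z$; I split cases according to whether $p\in R$. If $p\in R$, then $p$ is not a unit in $R$ (otherwise it would be a unit in $Z$), so $R/pR$ is nonzero and $Z/pZ\cong (R/pR)[u_1,\ldots,u_r]$ is a polynomial ring over a nonzero ring in $r\geq 1$ variables, hence infinite (the classes of the $u_1^n$ are pairwise distinct). If $p\notin R$, I reorder the variables so that $u_r$ appears in $p$, with $d=\deg_{u_r}(p)\geq 1$, and set $A=R[u_1,\ldots,u_{r-1}]$, so that $Z=A[u_r]$ is a polynomial ring over the integral domain $A$. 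For $a\in A\cap pZ$, writing $a=pq$ with $q\in Z$ and comparing $u_r$-degrees inside the integral domain $A[u_r]$ forces $q=0$, hence $a=0$; thus $A\hookrightarrow Z/pZ$. When $r\geq 2$, $A$ contains $R[u_1]$ and hence the infinite family $\{u_1^n\}_{n\geq 0}$; when $r=1$, $A=R$ is infinite by hypothesis. Either way $Z/pZ$ is infinite.

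The infinite-field case is immediate: if $k\subset Z$ is an infinite field and $p$ is a prime of $Z$, then $k\cap pZ$ is an ideal of $k$ not containing $1$ (since $pZ$ is proper), hence $\{0\}$, so $k$ embeds into $Z/pZ$, which is therefore infinite. The only delicate step in the whole argument is the $u_r$-degree comparison in the ``$p\notin R$'' case of part (b); it must be carried out inside $A[u_r]$ with $A$ a genuine integral domain so that $u_r$-degrees are additive under multiplication and the embedding $A\hookrightarrow Z/pZ$ is justified. Everything else reduces to the infinitude of $Z/pZ$ combined with the fact that a nonzero polynomial over an integral domain has only finitely many roots.
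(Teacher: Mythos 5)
Your proof is correct and follows essentially the same route as the paper: part (a) rests on reduction modulo $p$ together with the fact that a nonzero polynomial over the infinite integral domain $Z/pZ$ cannot vanish everywhere (you argue the direct implication where the paper argues the contrapositive, but the content is identical), and part (b) rests on the same $u_r$-degree comparison to embed $R[u_1,\ldots,u_{r-1}]$ into $Z/pZ$. The only cosmetic difference is that in (b) you handle the two hypotheses ``$R$ infinite'' and ``$r\geq 2$'' uniformly through the one embedding $A\hookrightarrow Z/pZ$, whereas the paper first treats $R$ infinite and then reduces $r\geq 2$ to that case by replacing $R$ with $R[u_1]$.
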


On the other hand, $\Zz$ is a typical example of ring that has {\it finite} quotients by prime principal ideals. Conclusions
in (a) are in fact false for $Z=\Zz$ (Remark \ref{rem:AV2}).

\begin{proof}
(a-1) 
Assume that (AV1) fails, \hbox{i.e.} there is a prime $p\in Z$ such that $\prod_{i=1}^s P_i(m) = 0$ in $Z/pZ$ for every $m\in Z$ and $i=1,\ldots,s$. The quotient ring $Z/pZ$ is an integral domain and, under the assumption on $Z$, is infinite. Therefore the polynomial $\prod_{i=1}^s P_i$ is zero in $(Z/pZ)[y]$. As this ring is an integral domain, there is an index $i$ in $\{1,\ldots,s\}$ such that $P_i$ is zero in $(Z/pZ)[y]$. Conclude that $p$ is a prime divisor of $P_i$. 
Conversely, if $p\in Z$ is a prime divisor of $P_i$ for some $i\in \{1,\ldots,s\}$, it is a common prime 
divisor of all $\prod_{i=1}^s P_i(m)$ with $m\in Z$, and so Assumption (AV1) fails.

\vskip 1mm

\noindent
(a-2) 
Assume that (AV2) fails, \hbox{i.e.} there is a prime $p\in Z$ such that $P_i(m) = 0$ in $Z/pZ$ for every $m\in Z$ and $i=1,\ldots,s$. The quotient ring $Z/pZ$ is integral and infinite.  Hence $P_i = 0$ in $(Z/pZ)[y]$, $i=1,\ldots,s$. Conclude that $p$ is a common prime divisor of $P_1,\ldots,P_s$. Conversely, if $p\in Z$ is a common prime divisor of $P_1,\ldots,P_s$, it is a common prime divisor of all values $P_1(m),\ldots,P_s(m)$ with $m\in Z$, and so Assumption (AV2) fails.

\vskip 1mm

\noindent
(b) With $Z=R[u_1,\ldots,u_r]$ as in the statement, assume further that $R$ is infinite. Let $p\in R[u_1,\ldots,u_r]$ be a prime element. Suppose first that $p\not\in R$, say $d=\deg_{u_r}(p)\geq 1$. 
The elements $1,u_r,\ldots,u_r^{d-1}$ are $R[u_1,\ldots,u_{r-1}]$-linearly independent in the integral domain $Z/pZ$.
As $R$ is infinite, the elements $\sum_{i=0}^{d-1} p_i u_r^i$ with $p_0,\ldots,p_{d-1}\in R$ are  infinitely many different elements in $Z/p Z$. Thus $Z/pZ$ is infinite. 
In the case that $p\in R$, the quotient ring $Z/pZ$ is $(R/pR)[u_1,\ldots,u_r]$, which is infinite too. 

If $Z=R[u_1,\ldots,u_r]$ with $r\geq 2$, write $R[u_1,\ldots,u_r] = R[u_1][u_2,\ldots,u_r]$ and use the previous
paragraph with $R$ taken to be the infinite ring $R[u_1]$.

If $Z$ is an integral domain containing an infinite field $k$, the containment $k\subset Z$ in\-du\-ces an injective morphism $k\hookrightarrow Z/{\frak p}$ for every prime ideal ${\frak p}\subset Z$. The last claim follows.
\end{proof}

\begin{proposition} \label{prop:schinzel-implies-coprime}
Let $Z$ be a UFD with the property that  the quotients of $Z$ by prime principal ideals are infinite.
If the Schinzel Hypothesis holds for $Z$, then the relative Schinzel Hypothesis holds for $Z$.
\end{proposition}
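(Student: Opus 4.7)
\emph{Proof plan.} The strategy is to reduce the relative Schinzel Hypothesis to the (full) Schinzel Hypothesis by factoring each $P_i$ into its primitive irreducible parts of positive degree and then choosing $m$ so that each such factor evaluates to a distinct prime. Since $Z$ is a UFD, each polynomial admits a factorization
\[ P_i = a_i \prod_{j} R_{i,j}^{e_{i,j}} \qquad (i=1,\ldots,s), \]
with $a_i \in Z$ the content and the $R_{i,j}\in Z[y]$ primitive irreducible of positive degree. Let $R_1,\ldots,R_N$ be a list of representatives, modulo $Z^\times$, of all the $R_{i,j}$'s appearing across $i=1,\ldots,s$, and let $p_1,\ldots,p_M$ be the prime divisors in $Z$ of the product $a_1\cdots a_s$.

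Before invoking Schinzel I would record two consequences of the hypotheses. Since a prime $p\in Z$ divides $P_i$ in $Z[y]$ if and only if $p$ divides the content $a_i$, Lemma \ref{lem:prelim-proof-R[u]}(a-2) together with the residue property rewrites Assumption (AV2) as $\gcd(a_1,\ldots,a_s) = 1$. By Gauss's lemma, the hypothesis that the $P_i$ have no common divisor in $Q[y]$ means that no $R_k$ divides every $P_i$. Each $R_k$ is primitive and irreducible in $Z[y]$, hence irreducible in $Q[y]$ by Gauss, and primitivity shows it has no prime divisor in $Z$, so by Lemma \ref{lem:prelim-proof-R[u]}(a-1) the family $R_1,\ldots,R_N$ satisfies Assumption (AV1); the $R_k$'s are also pairwise distinct modulo $Q^\times$, since for primitive polynomials this coincides with distinctness modulo $Z^\times$.

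I would then apply the Schinzel Hypothesis for $Z$ to $R_1,\ldots,R_N$ with excluded set $S = \{p_1,\ldots,p_M\}$, producing $m\in Z$ such that every $R_k(m)$ is a prime of $Z$, the $R_k(m)$'s are pairwise non-associate, and none is associate to any $p_j$. Suppose a prime $p$ of $Z$ divided every $P_i(m) = a_i \prod_j R_{i,j}(m)^{e_{i,j}}$. For each $i$, either $p\mid a_i$ (so $p$ is associate to some $p_j$ and, by the choice of $m$, not associate to any $R_k(m)$), or $p$ is associate to some $R_{i,j}(m)$. In the first scenario, the analysis applied to every $i$ forces $p\mid a_i$ for all $i$, contradicting $\gcd(a_i)=1$. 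In the second, the pairwise non-associateness of the $R_k(m)$'s shows that one and the same polynomial $R_{i_0,j_0}$ must divide every $P_i$, contradicting the no-common-divisor assumption in $Q[y]$. The degenerate case $N=0$ (all $P_i$ constant) is immediate from $\gcd(a_i)=1$. The only step requiring care is the bookkeeping of prime sources (contents $a_i$ versus values $R_k(m)$), and including $S=\{p_1,\ldots,p_M\}$ in the Schinzel Hypothesis is precisely what makes these two sources disjoint and closes the argument.
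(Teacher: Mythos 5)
Your proof is correct and follows essentially the same route as the paper's: factor each $P_i$ in the UFD $Z[y]$, apply the (generalized) Schinzel Hypothesis to the positive-degree irreducible factors while excluding the constant primes, and conclude from the resulting pairwise non-associateness that any common prime of the $P_i(m)$ would force a common factor of the $P_i$. The only cosmetic difference is that you separate each $P_i$ into its content $a_i$ and primitive part before invoking Schinzel, whereas the paper keeps the constant primes inside a single factorization $P_i = \prod_j P_{ij}^{\beta_{ij}}$; your version makes the final dichotomy (prime coming from a content versus from a value $R_k(m)$) slightly more explicit, but the underlying argument is identical.
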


\begin{proof} 
Let $P_1,\ldots,P_s\in Z[y]$  be $s\geq 2$ nonzero polynomials, with no common divisor  in $Q[y]$, and satisfying Assumption on Values (AV2); equivalently, $P_1,\ldots,P_s$ have no common prime divisor in $Z$ (Lemma \ref{lem:prelim-proof-R[u]}).

Consider, for $i=1,\ldots,s$, the prime factorization
\vskip 1mm

\centerline{$P_i(y) = \prod_{j\in I_i} {P}_{ij}(y)^{\beta_{ij}}$}
\vskip 1mm

\noindent
of $P_i$ in the UFD $Z[y]$; the set $I_i$ is finite, the ${P}_{ij}$ are irreducible in $Z[y]$, pairwise distinct modulo $Z^\times$, and the $\beta_{ij}$ are positive integers. Denote  the finite list of polynomials $P_{ij}(y)$ ($j\in I_i$, $i=1,\ldots,s$) that are of positive degree in $y$ by ${\mathcal L}$. 

The polynomials ${\sf P}\in {\mathcal L}$, being irreducible in $Z[y]$ and of degree $\geq 1$, are irreducible in $Q[y]$ and have no prime divisor in $Z$ (Gauss's lemma). From Lemma \ref{lem:prelim-proof-R[u]}, they satisfy Assumption on Values {\rm (AV1)}. As we assume that the Schinzel Hypothesis holds for $Z$, we can conclude that there exist infinitely many $m \in Z$ such that the values ${\sf P}(m)$ with ${\sf P}\in {\mathcal L}$ are primes of $Z$, are pairwise distinct modulo $Z^\times$, and are different modulo $Z^\times$ from
the constant polynomials $P_{ij}$ involved in the prime factorizations of $P_1,\ldots,P_s$ above. 
Conclude that for those $m\in Z$,
\vskip 1mm

\centerline{$P_i(m) = \prod_{j\in I_i} P_{ij}(m)^{\beta_{ij}}$}
\vskip 1mm

\noindent
is a prime factorization of $P_i(m)$ in  $Z$, $i=1,\ldots,s$, with the property that 
\vskip 1mm

\centerline{$P_{ij}(m) \not= P_{i^\prime j^\prime}(m)$, modulo $Z^\times$, if $(i,j)\not=(i^\prime,j^\prime)$.}
\vskip 1mm

\noindent
For such choices of $m$, $P_1(m), \ldots, P_s(m)$ have 
no common divisor in $Z$.
\end{proof}

\section{The polynomial ring case} \label{ssec:SH_more_general_rings}

If $R$ is not a field, polynomial rings $R[\underline u]$ do not fall into case (a) or (b) of Theorem \ref{thm:main_theorem}. A recent new tool is however available for them: Theorem 1.1 of \cite{BDN19} shows that the Schinzel Hypothesis itself, in the form given in Definition \ref{def:schinzel-general}, holds for polynomial rings $Z=R[u_1,\ldots,u_r]$ over a UFD $R$ if this condition holds:
\vskip 0,5mm

\noindent
(1) {\it $r\geq 1$, $K={\rm Frac}(R)$ has {a product formula}, and $K$ is imperfect if of \hbox{characteristic $p>0$.}}
\vskip 1mm

For {\it fields with the product formula}, we refer to \cite[\S 15.3]{FrJa}. 
The basic example is $\Qq$. The product formula is: $\prod_{p} |a|_p \cdot |a| = 1$
for every $a\in \Qq^*$, where $p$ ranges over all prime numbers, $|\cdot|_p$ is the $p$-adic absolute value 
and $|\cdot |$ is the standard absolute value. Rational function fields $k(u_1,\ldots,u_r)$ in $r\geq 1$ variables 
over a field $k$ and finite extensions of fields with the product formula are other examples. Also recall that a  field $K$ of characteristic $p>0$ is {\it imperfect} if $K^p\not = K$.

Conjoining \cite{BDN19} and Proposition \ref{prop:schinzel-implies-coprime}, we obtain that the relative Schinzel Hypothesis holds for polynomial rings $R[u_1,\ldots,u_r]$ satisfying (1). This is in particular always true if $r\geq 2$: write $R[u_1,\ldots,u_r]=R[u_1][u_2,\ldots,u_r]$ and note that (1) is satisfied with $R$ taken to be $R[u_1]$. This however leaves out some polynomial rings of interest, for example, polynomial rings $R[u]$ in one variable over a discrete valuation ring $R$, e.g. $\Zz_p[u]$, $\Cc[[x]][u]$. 
 
Case (c) of Theorem \ref{thm:main_theorem} includes these missing cases, in fact includes all polynomials rings over a UFD. The proof given below also rests on \cite{BDN19}, but uses some more precise results than \cite[Theorem 1.1]{BDN19}.

\subsection{The main ingredient} In order to prove Theorem \ref{thm:main_theorem}(c), we could restrict to the case of polynomial rings in one variable ($r=1$) for two reasons: first, because an obvious inductive argument gives the general case $r\geq 1$; second, because the case $r\geq 2$ is already known, as just explained above.
However our proof is somehow constructive and we prefer to keep this feature in the general case $r\geq 1$. 

Theorem \ref{thm:main_theorem}(c)  will be deduced from this specific statement.

\begin{proposition} \label{prop:main_general}
Let $R_0$, $R$ be two integral domains with $R_0\subset R$ and $K_0$, $K$ be their fraction fields. Assume that $K_0$ is a field with the product formula, and is imperfect if of characteristic $p>0$. Let $\underline u=(u_1,\ldots,u_r)$ be an $r$-tuple of variables ($r\geq 1$).
Let $P_1,\ldots,P_s\in R_0[\underline u,y]$ be $s\geq 2$ nonzero polynomials, with no common divisor in $K[\underline u,y]$ and no common prime divisor $p\in R$.
Then there exists $m\in R_0[\underline u]$ such that 
$P_1(\underline u,m(\underline u))$,$\ldots$, $P_s(\underline u,m(\underline u))$ have no common divisor in $K[\underline u]$ and no common prime divisor $p\in R$.
\end{proposition}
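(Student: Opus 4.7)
My plan is to apply the Schinzel Hypothesis from \cite{BDN19} to the UFD $K_0[\underline u]$---which satisfies condition~(1), since $K_0$ is a field with the product formula and is imperfect in positive characteristic---to the irreducible factors of the $P_i$ in $K_0[\underline u,y]$, and then to correct the resulting $m$ into $R_0[\underline u]$ while imposing congruence conditions modulo the relevant primes of $R$ by a Chinese Remainder argument in the spirit of Section~\ref{ssec:proofs_PID}.

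\textbf{Main steps.} First, I would factor each $P_i$ in the UFD $K_0[\underline u,y]$ as
\[P_i = c_i(\underline u)\prod_j Q_{ij}(\underline u,y)^{\beta_{ij}}\]
with the $Q_{ij}$ irreducible of positive $y$-degree and pairwise distinct modulo $K_0^\times$. After rescaling by elements of $K_0$, the finite list $\mathcal L$ of distinct $Q_{ij}$ lies in $R_0[\underline u,y]$; flatness of $K_0\hookrightarrow K$ shows that the ``no common divisor in $K[\underline u,y]$'' hypothesis descends to $K_0[\underline u,y]$, so no single $Q\in\mathcal L$ divides all the $P_i$. I would then verify that $\mathcal L$ is an admissible input for the Schinzel Hypothesis over $Z=K_0[\underline u]$: the elements of $\mathcal L$ are irreducible in $K_0(\underline u)[y]$ by Gauss's lemma, pairwise distinct modulo $K_0(\underline u)^\times$, and Assumption~(AV1) over $K_0[\underline u]$ holds by Lemma~\ref{lem:prelim-proof-R[u]}, since $K_0$ is infinite (being either of characteristic zero or imperfect) and no $Q\in\mathcal L$ can have a prime divisor in $K_0[\underline u]$ without ceasing to be irreducible. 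Applying \cite[Theorem~1.1]{BDN19} in a refined form that allows $m$ to be prescribed in an arithmetic progression (analogous to Theorem~3.1 of \cite{acta2016} in the classical case), I would produce $m\in R_0[\underline u]$ such that the values $Q(\underline u,m(\underline u))$ for $Q\in\mathcal L$ are pairwise non-associate primes of $K_0[\underline u]$. I would then layer in a CRT-style adjustment to handle primes of $R$: the primes $p\in R$ that could still divide every $P_i(\underline u,m(\underline u))$ come from those dividing the $c_i(\underline u)$, and for each such $p$ the hypothesis ``no common prime divisor $p\in R$'' lets me choose a congruence class for $m$ modulo $p$ making some $P_{i_p}(\underline u,m(\underline u))$ nonzero modulo $p$; combining these congruences via CRT with the Schinzel progression yields the desired $m\in R_0[\underline u]$.

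\textbf{Conclusion and main obstacle.} For the chosen $m$, the factorization
\[P_i(\underline u,m(\underline u))=c_i(\underline u)\prod_Q Q(\underline u,m(\underline u))^{\beta_{iQ}}\]
in $K_0[\underline u]$, together with the distinctness of the prime values $Q(\underline u,m(\underline u))$ and the fact that no $Q$ divides all the $P_i$, gives no common divisor in $K_0[\underline u]$, hence in $K[\underline u]$ by flatness, and the CRT step rules out a common prime divisor in $R$. The principal obstacle is this refined form of \cite[Theorem~1.1]{BDN19}: Schinzel's Hypothesis naively places $m$ in $K_0[\underline u]$, whereas I need $m\in R_0[\underline u]$ and compatibility with prescribed congruences modulo finitely many primes of $R$. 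Extracting this refinement from the constructive proof of \cite{BDN19} is where the real work lies, and presumably is what the paper signals as ``more precise results than \cite[Theorem~1.1]{BDN19}'' at the end of Section~\ref{ssec:SH_more_general_rings}.
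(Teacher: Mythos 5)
Your first stage is on the right track: the paper also factors the $P_i$ in $K_0[\underline u,y]$ and uses the Schinzel-type machinery of \cite{BDN19} over $K_0[\underline u]$ to produce a specialization $m$ for which the positive-$y$-degree factors specialize to pairwise non-associate primes of $K_0[\underline u]$. A small point of route: the paper does not invoke \cite[Theorem 1.1]{BDN19} as a black box and then try to ``correct'' $m$ into $R_0[\underline u]$; it takes $m$ of the explicit form $m(\underline u)=\sum_{j=0}^{\ell}\lambda_j Q_j(\underline u)$ with monic monomials $Q_j$, and uses the \emph{Hilbertian ring} property of $R_0$ (\cite[Proposition 4.2]{BDN19}, available because $K_0$ has a product formula and is imperfect in characteristic $p>0$, via \cite[Theorem 4.6]{BDN19}) to choose the $\lambda_j^\ast$ directly in $R_0$. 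That sidesteps the ``arithmetic progression refinement'' you flag as your principal obstacle.

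The genuine gap, however, is elsewhere, in your handling of the primes of $R$. Your plan---CRT congruences modulo the primes ``that could still divide every $P_i(\underline u,m(\underline u))$''---fails for several compounding reasons. First, $R$ is only assumed to be an integral domain: there is no CRT for prime elements of $R$ in general, and no reason for the relevant primes to form a finite (or even meaningful) set, since $R$ need not be a UFD. Second, and more fundamentally, your identification of the dangerous primes is wrong: you claim they ``come from those dividing the $c_i(\underline u)$'', but a prime $p\in R$ can perfectly well divide $Q(\underline u,m(\underline u))$ for $Q\in\mathcal L$ even when $Q$ has content $1$ and $c_i$ is a unit---take $R=\Zz$, $Q=y+2$, $m=0$. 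Irreducibility of $Q(\underline u,m(\underline u))$ in $K_0[\underline u]$ says nothing about its divisibility by constants of $R$. So the set of primes you would need to dodge is not controlled by the $c_i$ at all, and the CRT step has nothing finite to work with.

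The paper's resolution is Lemma~\ref{lem:primitive}, which is the ingredient you are missing. Rather than treating primes of $R$ one at a time, it imposes a \emph{structural} condition on $m$: two of its monomials $m_1,m_2$ have degrees strictly exceeding $\max_i\deg_{\underline u}(P_i)$, and their coefficients $\mu_1,\mu_2$ satisfy $\mu_2\equiv1\pmod{\mu_1}$ (this is built into the choice $\lambda_2^\ast\equiv1\pmod{\lambda_1^\ast}$ provided by \cite[Proposition 4.2]{BDN19}). Under these two conditions, for \emph{every} prime $p$ of $R$ simultaneously, if $p\nmid P_i$ then $p\nmid P_i(\underline u,m(\underline u))$: reducing mod $p$, at least one of $\overline\mu_1,\overline\mu_2$ is nonzero, so $\overline m$ has high degree, and the degree bookkeeping in the lemma forces all nonzero terms of $\sum_h \overline P_{i,h}\,\overline m^h$ to have distinct degrees, contradicting $\overline P_i(\underline u,\overline m)=0$. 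Since the hypothesis guarantees no $p\in R$ divides all $P_i$, no $p$ divides all $P_i(\underline u,m(\underline u))$. This uniform argument is what makes the proposition hold without any finiteness or factorization assumption on $R$, and is precisely the step your CRT sketch cannot replace.
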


\begin{remark} \label{rem:final_argument} This remark will be used several times in the rest of the paper.
\vskip 0,5mm

\noindent
(a)  In general, {\it given an integral domain $A$ of fraction field $C$, if polynomials $Q_1,\ldots,Q_s \in A[\underline u]$ have no common divisor in $C[\underline u]$ and no common prime divisor $p\in A$, then they have no common prime divisor in $A[\underline u]$}. Indeed, a common prime divisor in $A[\underline u]$, say $\pi$, of $Q_1,\ldots,Q_s$ cannot be of degree $\geq 1$ as then $\pi$ would have a prime divisor  of degree $\geq 1$ in the UFD $C[\underline u]$, which itself would be a common prime divisor in $C[\underline u]$ of $Q_1,\ldots,Q_s$; and $\pi$ cannot be of degree $0$, as then it would be a common prime divisor in $A$ of $Q_1,\ldots,Q_s$. 

\vskip 1mm

\noindent
(b) Part (a) above shows that the conclusion of Proposition \ref{prop:main_general}, 
\hbox{viz.} ``{\it $P_1(\underline u,m(\underline u))$,$\ldots$, $P_s(\underline u,m(\underline u))$ have no common divisor in $K[\underline u]$ and no common prime divisor $p\in R$}'' implies that ``{\it $P_1(\underline u,m(\underline u)), \ldots, P_s(\underline u,m(\underline u))$ have {\it no common {\rm prime} divisor} in $R[\underline u]$}''. 
If $R$ is a UFD (which is not assumed in Proposition \ref{prop:main_general}), the two conditions are equivalent, 
and equivalent to {\it $P_1(\underline u,m(\underline u)), \ldots, P_s(\underline u,m(\underline u))$ having {\it no common divisor} in $R[\underline u]$} (prime or not), 
\hbox{i.e.} the conclusion of the relative Schinzel Hypothesis for $Z=R[\underline u]$.
\end{remark}

We assume more generally in the proof below that $R_0$ is a {\it Hilbertian ring}. 

\begin{definition} \label{def:HilbertUFD} Let $R_0$ be an integral domain such that the fraction field $K_0$ is imperfect if of characteristic $p>0$. The ring $R_0$ is said to be a {\it Hilbertian ring} if every Hilbert subset of $K_0$ \hbox{\rm (Definition is recalled in Section \ref{ssec:HIT})}  contains elements of $R_0$. \end{definition}

The original definition from \cite[\S 13.4]{FrJa} has the defining condition only requested for ``separable Hilbert subsets'' but 
\cite[Proposition 4.2]{BDN19} shows that it is equivalent to Definition \ref{def:HilbertUFD} under the imperfectness condition.
\cite[Theorem 4.6]{BDN19} shows further that assumptions on $R_0$ from Proposition \ref{prop:main_general}, i.e. $K_0={\rm Frac}(R_0) $ being a field with the product formula, imperfect if of characteristic $p>0$, imply that $R_0$ is a {Hilbertian ring}.

\begin{proof}[Proof of Proposition \ref{prop:main_general}]
The polynomials $P_1,\ldots,P_s$ are in $K_0[\underline u,y]$, and since they have no common divisor in $K[\underline u,y]$, they also have no common divisor in $K_0[\underline u,y]$.
Consider, for $i=1,\ldots,s$, the prime factorization
\vskip 1mm

\centerline{$P_i(\underline u,y) = \prod_{j\in I_i} {P}_{ij}(\underline u,y)^{\beta_{ij}}$}
\vskip 1mm

\noindent
of $P_i(\underline u,y)$ in the UFD $K_0[\underline u,y]$; the set $I_i$ is finite, the ${P}_{ij}$ are irreducible in $K_0[\underline u,y]$, pairwise distinct modulo $K_0^\times$, and the $\beta_{ij}$ are positive integers. Denote  the finite list of all polynomials $P_{ij}(\underline u,y)$ ($j\in I_i$, $i=1,\ldots,s$) that are of positive degree in $y$ by ${\mathcal L}$. 

Let $\ell\geq 2$ be an integer, $\underline \lambda=(\lambda_0, \lambda_1,\ldots,\lambda_\ell)$ be a $(\ell+1)$-tuple of variables and $Q_0, Q_1,\ldots,Q_\ell$ be $(\ell+1)$ distinct monic monomials in $R_0[\underline u]$ such that $Q_0=1$ and both $Q_1$ and $Q_2$ are of degree $\deg(Q_i) > \max_{1\leq i\leq s} \deg_{\underline u}(P_i)$.

For each ${\sf P}\in {\mathcal L}$, write:
\vskip 1,5mm

\centerline{$\left\{\begin{matrix}
{\sf P}(\underline u,y)= {\sf P}_{\rho}(\underline u) \hskip 2pt y^{\rho} +\cdots+ {\sf P}_{1}(\underline u) \hskip 1pt y + {\sf P}_{0}(\underline u)$ \hskip 5mm ($\rho = \deg_y({\sf P}) \geq 1) \hfill \cr
M(\underline \lambda,\underline u) = \sum_{j=0}^{\ell} \lambda_j \hskip 1pt Q_j(\underline u) \hfill \cr
{\sf F}(\underline \lambda,\underline u) 
= {\sf P}(\underline u, M(\underline \lambda,\underline u))= {\sf P}(\underline u, \sum_{j=0}^\ell \lambda_j Q_j(\underline u)) \hfill \cr
\end{matrix}\right.$
}
\vskip 1,5mm

\noindent Each polynomial $\sf F$ is irreducible in $K_0[\underline \lambda, \underline u]$ \cite[Lemma 2.1]{BDN19}. 
As we assume that $R_0$ is a Hilbertian ring, and is imperfect if of characteristic $p>0$, we may apply \cite[Proposition 4.2]{BDN19} to produce a $(\ell+1)$-tuple $\underline \lambda^\ast= (\lambda_0^\ast, \ldots, \lambda_{\ell+1}^\ast)$ with nonzero coordinates in $R_0$ such that $\lambda_2^\ast \equiv 1 \pmod{\lambda_1^\ast}$ and
for every ${\sf P}\in {\mathcal  L}$, the corresponding polynomial ${\sf F}(\underline \lambda^\ast,\underline u)$ is 
irreducible in $K_0[\underline u]$.

Set $m(\underline u) = M(\underline \lambda^\ast,\underline u)$. The polynomial $m(\underline u)$ is in $R_0[\underline u]$ and has this first property:
for each ${\sf P}\in {\mathcal L}$, the polynomial ${\sf P}(\underline u, m(\underline u))$ is irreducible in $K_0[\underline u]$.
Obviously, for the irreducible polynomials $P_{ij}$ of degree $0$ in $y$ involved in the prime factorizations of $P_1,\ldots,P_s$ in $K_0[\underline u,y]$, the polynomial $P_{ij}(\underline u,m(\underline u))$ is irreducible in $K_0[\underline u]$ as well. Conclude that
\vskip 1mm

\centerline{$P_i(\underline u,m(\underline u)) = \prod_{j\in I_i} {P}_{ij}(\underline u,m(\underline u))^{\beta_{ij}}$}
\vskip 1mm

\noindent
is a prime factorization of $P_i(\underline u,m(\underline u))$ in the UFD $K_0[\underline u]$, $i=1,\ldots,s$. 

Furthermore, inequality 
\vskip 1mm

\centerline{$\displaystyle \deg(m) \geq \deg(Q_1)>\max_{1\leq i\leq s} \deg_{\underline u}(P_i)$}
\vskip 1mm

\noindent
shows that all polynomials $P_{ij}(\underline u,m(\underline u))$ ($j\in I_i$, $i=1,\ldots,s$) 
are distinct modulo $K_0^\times$.

This follows from this standard argument: if $Q(\underline u,y) = \sum_{i=0}^\nu q_i(\underline u)\hskip 1pt  y^i \in R[\underline u,y]$ is such that $Q(\underline u,m(\underline u)) = 0$ with $\deg_{\underline u} (m) >  \deg_{\underline u}(Q)$, then all terms 
$q_i(\underline u) \hskip 1pt m(\underline u)^i$, $i=0,1,\ldots,\nu$, are of pairwise distinct degrees and so must all be zero, thus implying that $Q(\underline u,y)$ itself must be zero.  Applied with $Q=P_{ij} - cP_{i^\prime j^\prime}$ ($c\in K_0^\times$), this argument proves the claim.

Thus we obtain that the polynomials $P_1(\underline u,m(\underline u)), \ldots, P_s(\underline u,m(\underline u)) $ have no common divisor in $K_0[\underline u]$.
But then they also have no common divisor in $K[\underline u]$. This follows from the B\'ezout theorem when $\underline u$ is a single variable; an argument for the general case $r\geq 1$ is given in the proof of
\cite[Lemma 2.1]{BDN19}.

Finally we should show that the polynomials $P_1(\underline u,m(\underline u)), \ldots, P_s(\underline u,m(\underline u))$, which are in $R[\underline u]$, have no common prime divisor $p\in R$. This follows from Lemma \ref{lem:primitive} below. Note that by construction, the polynomial $m(\underline u)$ satisfies its assumptions  (i) and (ii).  If some $p\in R$ were a common prime divisor of each 
$P_i(\underline u,m(\underline u))$, $i=1,\ldots,s$, Lemma \ref{lem:primitive} would conclude that 
$p$ is a common prime divisor of $P_1,\ldots,P_s$, contrary to our assumption.
\end{proof}

\begin{lemma} \label{lem:primitive}
Let $R$ be an integral domain with fraction field $K$ and $p$ be a prime of $R$.
Let $\underline u=(u_1,\ldots,u_r)$ be an $r$-tuple of variables ($r\geq 1$) and $P\in R[\underline u,y]$ 
be a polynomial not divisible by $p$. Let $m(\underline u) \in R[\underline u]$ be a polynomial such that two 
of its monomials $m_1(\underline u)$, $m_2(\underline u)$, with corresponding coefficients $\mu_1,
\mu_2\in R$, satisfy:
\vskip 1,5mm

\centerline{$\left\{\begin{matrix}
{\hbox{\rm (i)}}\hskip 5mm  \mu_2 \equiv 1 \pmod{\mu_1} \hfill \cr
{\hbox{\rm (ii)}}\hskip 3mm \min(\deg(m_1),\deg(m_2)) > \deg_{\underline u}(P). \hfill \cr
\end{matrix}\right.$
}
\vskip 1,5mm

\noindent
Then the polynomial $P(\underline u,m(\underline u))$ is not divisible by the prime $p$.
\end{lemma}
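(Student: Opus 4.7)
The plan is to reduce modulo $p$ and then invoke the standard degree-growth argument that was recalled in the proof of Proposition \ref{prop:main_general}. Write $\overline{R} = R/pR$, which is an integral domain since $p$ is prime, and overline the images of $P$, $m(\underline u)$, $\mu_1$, $\mu_2$ under the induced reductions. The hypothesis $p \nmid P$ says $\overline{P} \neq 0$ in $\overline{R}[\underline u, y]$, and showing that $p \nmid P(\underline u, m(\underline u))$ amounts to proving $\overline{P}(\underline u, \overline{m}(\underline u)) \neq 0$ in $\overline{R}[\underline u]$.

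The first step is to verify that $\overline{m}$ still has total $\underline u$-degree strictly larger than $\deg_{\underline u}(P)$. This is where condition (i) enters, via a short dichotomy: if $p \nmid \mu_1$ then $\overline{\mu_1} \neq 0$, so the monomial $\overline{\mu_1}\, m_1(\underline u)$ survives in $\overline{m}$ and $\deg \overline{m} \geq \deg m_1$; if $p \mid \mu_1$ then (i) forces $\mu_2 \equiv 1 \pmod p$, so $\overline{\mu_2} = 1 \neq 0$, the monomial $m_2(\underline u)$ survives in $\overline{m}$, and $\deg \overline{m} \geq \deg m_2$. Combined with (ii) this yields $\deg_{\underline u} \overline{m} > \deg_{\underline u}(P) \geq \deg_{\underline u}(\overline{P})$.

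The second step is the standard degree-growth argument. Assume for contradiction that $\overline{P}(\underline u, \overline{m}(\underline u)) = 0$ and expand $\overline{P} = \sum_{i=0}^{\nu} \overline{P_i}(\underline u)\, y^i$. Then $\sum_i \overline{P_i}(\underline u)\, \overline{m}(\underline u)^i = 0$ in $\overline{R}[\underline u]$. Since $\overline{R}$ is a domain, total degree is additive on products of nonzero polynomials, so the nonzero summands have pairwise distinct total degrees $\deg \overline{P_i} + i\deg \overline{m}$ (because the gap $\deg \overline{m}$ between consecutive $i$'s dominates any difference between the $\deg \overline{P_i}$, as $\deg \overline{m} > \deg_{\underline u}(\overline{P})$). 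Hence the top-degree term cannot cancel and every $\overline{P_i}$ must vanish, contradicting $\overline{P} \neq 0$.

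I do not expect any substantial obstacle here: condition (i) is tailor-made to preserve a large-degree monomial in $\overline{m}$ regardless of whether $\mu_1$ itself reduces to zero, and condition (ii) is exactly what makes the degree-growth argument go through verbatim in the quotient. The whole lemma is really the residue-field instance of the identity-trick already used inside the proof of Proposition \ref{prop:main_general}, isolated for later reuse.
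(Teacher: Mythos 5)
Your proof is correct and follows essentially the same route as the paper's: reduce modulo $p$, use condition (i) via the dichotomy (either $\overline{\mu_1}\neq 0$ or $\overline{\mu_2}=1$) to guarantee a high-degree monomial survives in $\overline m$ and hence $\deg\overline m>\deg_{\underline u}(P)$, then run the degree-growth argument (nonzero terms $\overline{P_i}\,\overline m^{\,i}$ have pairwise distinct total degrees, so the sum can only vanish if all do). The paper packages the last part slightly differently — it records $\deg\overline m\geq\min(\deg m_1,\deg m_2)$ as a separate inequality and derives the contradiction with (ii) at the end rather than upfront — but the substance is identical.
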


\begin{proof} Set ${\frak p}=pR$. The proof works more generally if ${\frak p}$ is a prime ideal (not necessarily principal)
not containing all coefficients of $P$.
Write 
\vskip 1,5mm

\centerline{$P = P_{\rho}(\underline u) y^\rho + \cdots + P_{1}(\underline u) y + P_{0}(\underline u)$}
\vskip 1,5mm

\noindent
with $P_0,P_1,\ldots,P_\rho\in R[\underline u]$ and $P_\rho \not= 0$. The result being trivial if $\rho =0$, also assume $\rho\geq 1$.

Assume that all coefficients of $P(\underline u,m(\underline u))$ are in
${\frak p}$. Thus we have:
\vskip 1,5mm

\noindent
(2) \hskip 22mm $\overline P_{\rho}(\underline u) \hskip 2pt \overline m(\underline u)^{\rho} +\cdots+ \overline P_{1}(\underline u) \hskip 1pt \overline m(\underline u) +  \overline P_{0}(\underline u) = 0$,
\vskip 1,5mm

\noindent
where we use the notation $\overline U$ for the class modulo ${\frak p}$ of polynomials $U$ with coefficients in $R$. 
It follows from the assumption $P\not =0$ in $(R/{\frak p})[\underline u,y]$ that there is an index, say $j$, in $\{0,1,\ldots, \rho\}$ such that $\overline P_j (\underline u)\not= 0$ in $(R/{\frak p})[\underline u]$.

From assumption (i), we have that $\overline \mu_1$ or $\overline \mu_2$ is nonzero in $R/\frak p$.
Therefore both polynomials $\overline m(\underline u)$ and $\overline P_{j}(\underline u) \hskip 2pt \overline m(\underline u)^{j}$ are nonzero in $(R/\frak p)[\underline u]$.
Furthermore we have:
\vskip 1,5mm

\noindent
(3) \hskip 20mm $\deg (\overline m) \geq \min (\deg(m_1), \deg(m_2))$.
\vskip 1,5mm

The final argument below shows that all nonzero terms $\overline P_{h}(\underline u) \hskip 2pt \overline m(\underline u)^{h}$ with $h\in \{0,1,\ldots, \rho\}$ are of different degrees. This clearly contradicts identity (2).

Assume that two nonzero polynomials $\overline P_{h}(\underline u) \hskip 2pt \overline m(\underline u)^{h}$ and  $\overline P_{k}(\underline u) \hskip 2pt \overline m(\underline u)^{k}$ are of the same degree, for some $h,k \in \{0,1,\ldots, \rho\}$ with $k>h$. Then we have:
\vskip 1,5mm

\centerline{$\displaystyle \deg_{\underline u}(P) = \max_{0\leq i\leq \rho} \deg({P}_{i}) \geq \deg(\overline{P}_{h})-\deg(\overline{P}_{k}) = (k-h) \deg(\overline m) \geq \deg(\overline m)$.}
\vskip 1mm

\noindent
But this, conjoined with (3), contradicts assumption (ii) of the statement.
\end{proof}

\subsection{The final proof} \label{ssec:final_proof} 

\begin{proof}[Proof of Theorem \ref{thm:main_theorem}(c)]
Let $\underline u= (u_1,\ldots,u_r)$ be an $r$-tuple of variables ($r\geq 1$), $R$ be a UFD of fraction field $K$
and  $Z=R[\underline u]$. 
Let $P_1,\ldots,P_s \in Z[y]$ be nonzero polynomials ($s\geq 2$), with no common divisor in $K(\underline u)[y]$ and
satisfying Assumption on Values (AV2).
\vskip 0,5mm

We distinguish four cases.
\vskip 1mm

\noindent
{\it 1st case}: $R$ is of characteristic $0$. 

\noindent
Consider the subring $R_0\subset R$ generated by all the coefficients 
of  $P_1,\ldots,P_s$ as polynomials in $u_1,\ldots,u_r,y$. The fraction field $K_0$ is an extension of $\Qq$ of finite type, and so a finite extension of a field of the form $\Qq(\underline t)$ where $\underline t$ is a finite set, possibly empty, of elements of $K_0$, algebraically independent over $\Qq$. The field $\Qq(\underline t)$ is a field with the product formula: if ${\underline t}=\emptyset$, it is $\Qq$, and if  ${\underline t}\not=\emptyset$, it is isomorphic to a rational function field. Thus $K_0$, which is a finite extension of $\Qq(\underline t)$, is a field with the product formula as well. 

The polynomials $P_1,\ldots,P_s$ are in $R_0[\underline u,y]$, 
have no common divisor in $K(\underline u)[y]$, and from Assumption (AV2),
 they have no common prime divisor $p\in R[\underline u]$ (Lemma \ref{lem:prelim-proof-R[u]}). Hence they have no common 
prime divisor in $R[\underline u][y]$ (Remark \ref{rem:final_argument}). Since $R$ is assumed to be a UFD, it follows that they have no common divisor in $K[\underline u][y]$ (Gauss's lemma). As they have no common prime divisor $p\in R$, the assumptions of Proposition \ref{prop:main_general} hold. Thus we can conclude that there is a polynomial $m\in R_0[\underline u]$ such that $P_1(\underline u,m(\underline u)), \ldots, P_s(\underline u,m(\underline u))$ have no common divisor in $K[\underline u]$ and no common prime divisor $p\in R$. As $R$ is a UFD, this implies that $P_1(\underline u,m(\underline u)), \ldots, P_s(\underline u,m(\underline u))$ have no common divisor in $R[\underline u]$ (Remark \ref{rem:final_argument}).

\vskip 1mm
\noindent
{\it 2nd case}: $R$ is of characteristic $p>0$ and $R$ is not algebraic over its prime field $\Ff_p$. 

\noindent
Consider the subring $R_0\subset R$ generated by all the coefficients 
of  $P_1,\ldots,P_s$ as polynomials in $u_1,\ldots,u_r,y$ and by some element $\theta \in R$ not algebraic over $\Ff_p$.
The fraction field $K_0$ is an extension of $\Ff_p$ of finite type, and so a finite extension of a field $\Ff_p(\underline t)$ where $\underline t$ is a finite set of elements of $K_0$, algebraically independent over $\Ff_p$. Furthermore, $K_0$ is not algebraic over $\Ff_p$ (as it contains $\theta$), so we have 
$\underline t \not= \emptyset$. Hence the field $\Ff_p(\underline t)$ is a field with a product formula, and so $K_0$, 
which is a finite extension of $\Ff_p(\underline t)$, is a field with the product formula as well. Furthermore, $K_0$ is imperfect:
if $t\in {\underline t}$, then $t\in K_0$ but $t\notin K_0^p$.

Use Proposition \ref{prop:main_general} as in 1st case to conclude that there is a polynomial $m\in R_0[\underline u]$ such that $P_1(\underline u,m(\underline u)), \ldots, P_s(\underline u,m(\underline u))$ have no common divisor in $K[\underline u]$ and have no common prime divisor $p\in R$, and so, again by Remark \ref{rem:final_argument},  
have no common divisor in $Z$.

\vskip 1mm
\noindent
{\it 3rd case}: $R$ is of characteristic $p>0$ and $r\geq 2$. 

\noindent
Consider the subring $R_0\subset R[u_1]$ generated by  $u_1$ and all the coefficients 
of  $P_1,\ldots,P_s$ as polynomials in $u_1,\ldots,u_r,y$. Check as in 2nd case that 
the fraction field $K_0$ is a field with the product formula and is imperfect.

The polynomials $P_1,\ldots,P_s$ are in $R_0[u_2,\ldots,u_r,y]$. Here we apply Proposition \ref{prop:main_general} with the ring $R$ there taken to be the ring $R[u_1]$ here.
Observe as in 1st case that $P_1,\ldots,P_s$ have no common prime divisor in $R[u_1][u_2,\ldots,u_r,y]$.
It follows that they have no common divisor in $K[u_1][u_2,\ldots,u_r]$
and none in $K(u_1)[u_2,\ldots,u_r]$ either (using that $R$ is a UFD and Gauss's lemma). Furthermore they have no common prime divisor $p\in R[u_1]$: 
the case $\deg(p) \geq 1$ is ruled out by $P_1,\ldots,P_s$ having no common divisor in $K[u_1][u_2,\ldots,u_r]$, and
the case $\deg(p) =0 $ by Assumption (AV2).
Conclude from Proposition \ref{prop:main_general} that there is a polynomial $m\in R_0[u_2,\ldots,u_r]$ such that $P_1(\underline u,m(\underline u)), \ldots, P_s(\underline u,m(\underline u))$ 
have no common divisor in $K(u_1)][u_2,\ldots,u_r]$ and no common prime divisor $p\in R[u_1]$.
It follows that they have no common divisor in $K[u_1][u_2,\ldots,u_r]$ and no common prime divisor $p\in R$, and also, 
by Remark \ref{rem:final_argument}, 
that they have no common divisor in $R[u_1][u_2,\ldots,u_r]=Z$.

\vskip 1mm
\noindent
{\it 4th case}: $R$ is of characteristic $p>0$, $R$ is algebraic over its prime field $\Ff_p$ and $r=1$. 
The ring $R$ is then necessarily a field (as it is integral over a field). The ring $Z=R[\underline u] = R[u_1]$ 
is then a PID. The required conclusion follows from the already proven Theorem \ref{thm:main_theorem}(a).
\end{proof}

\begin{remark} \label{rem:final}
The assumption that $R$ is a UFD is used twice: first, to guarantee that the polynomials $P_1,\ldots,P_s$, having no common divisor in $R[\underline u,y]$, have no common divisor in $K[\underline u,y]$; second, to conclude with Remark \ref{rem:final_argument} that $P_1(\underline u,m(\underline u)), \ldots, P_s(\underline u,m(\underline u))$ have no common divisor in $R[\underline u]$. 

Regarding the former, we can replace $R$ being a UFD by the assumption that $P_1,\ldots,P_s$ have no common divisor in $K[\underline u,y]$ (which is stronger than the assumption of the relative Schinzel Hypothesis that they have no common divisor in $K(\underline u)[y]$).  Regarding the latter, we can stop the proof right after applying Proposition \ref{prop:main_general} and before using that $R$ is a UFD, \hbox{i.e.} conclude that $P_1(\underline u,m(\underline u)), \ldots, P_s(\underline u,m(\underline u))$ have no common divisor in $K[\underline u]$ and no common prime divisor $p\in R$. 
Thus we obtain this generalization of Theorem \ref{thm:main_theorem}{\rm (c)} to polynomial rings $R[\underline u]$ with coefficient rings $R$ that are not necessarily UFDs:
\vskip 1,5mm

\noindent
{\it Let $R$ be an integral domain with fraction field $K$ and $P_1,\ldots,P_s \in R[\underline u,y]$ be nonzero polynomials ($s\geq 2$), with no common divisor in $K[\underline u,y]$ and satisfying this {assumption}:
\vskip 1,5mm

\noindent
{\rm (AV2)} {\it no prime of $R[\underline u]$ divides all polynomials $P_1(\underline u,m(\underline u)), \ldots, P_s(\underline u,m(\underline u))$ ($m\in R[\underline u]$)}.

\vskip 1mm

\noindent
Then there exists $m\in R[\underline u]$ such that $P_1(\underline u, m(\underline u))$,\ldots,$P_s(\underline u, m(\underline u))$ have no common divisor in $K[\underline u]$ and no common prime divisor $p\in R$, and hence no common prime divisor in $R[\underline u]$.}
\end{remark}

\subsection{Proof of Theorem \ref{thm-intro2}} We will deduce it from the following generalization.

\begin{proposition} \label{thm-intro2-general}
Let $Z$ be a {\rm UFD} and assume further that $Z$ is a Hilbertian ring,  imperfect if of characteristic $p>0$ and satisfies the relative Schinzel Hypothesis. Let $\underline y = (y_1,\ldots,y_n)$ be an $n$-tuple of variables ($n\geq 1$) and $P(t,\underline y)$ be an irreducible polynomial in $Z[t,\underline y]$ of degree at least $1$ in $\underline y$ and such that
\vskip 1mm

\noindent
{\rm (AV3)} {\it no prime $p$ of $Z$ divides all polynomials $P(m,\underline y)$ with $m\in Z$}.

\vskip 1mm

\noindent
Then infinitely many $m\in Z$ exist such that 
$P(m,\underline y)$ is irreducible in $Z[\underline y]$.
\end{proposition}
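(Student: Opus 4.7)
The plan is to combine the relative Schinzel Hypothesis for $Z$ with the Hilbertian property of $Z$, bridged by the periodicity observation of Remark~\ref{rem:periodic}. The two ingredients needed for $P(m,\underline y)$ to be irreducible in $Z[\underline y]$ are primitivity with respect to $Z$ (handled by relative Schinzel) and irreducibility in $K[\underline y]$, where $K=\mathrm{Frac}(Z)$ (handled by Hilbertianity); Gauss's lemma over the UFD $Z$ then packages them together.

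First I would expand $P(t,\underline y)=\sum_\alpha c_\alpha(t)\,\underline y^\alpha$ with $c_\alpha(t)\in Z[t]$, and let $\mathcal C$ denote the family of nonzero coefficients. Since $P$ is irreducible in $Z[t,\underline y]$ of degree $\ge 1$ in $\underline y$, any common divisor of $\mathcal C$ in $Z[t]$ would be a non-trivial factor of $P$ of degree $0$ in $\underline y$, so the polynomials in $\mathcal C$ have no common divisor in $Z[t]$, and hence none in $K[t]$ by Gauss's lemma. The degenerate case $|\mathcal C|=1$ forces $P$ to be a unit times some $y_i$, for which the conclusion is immediate, so assume $|\mathcal C|\ge 2$. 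Assumption (AV3) for $P$ reads verbatim as Assumption (AV2) for the family $\mathcal C\subset Z[t]$, so the relative Schinzel Hypothesis, assumed for $Z$, yields an $m_0\in Z$ such that the values $\{c_\alpha(m_0)\}_\alpha$ have no common divisor in $Z$; equivalently, $P(m_0,\underline y)$ is primitive with respect to $Z$.

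Now fix a nonzero $\delta\in\bigl(\sum_\alpha c_\alpha(t)\,Z[t]\bigr)\cap Z$ as in Section~\ref{ssec:delta}. By the periodicity property of Remark~\ref{rem:periodic}, $P(m_0+\ell\delta,\underline y)$ remains primitive with respect to $Z$ for every $\ell\in Z$. Set $\widetilde P(t,\underline y):=P(m_0+\delta t,\underline y)\in Z[t,\underline y]$. Since $P$ is irreducible in $Z[t,\underline y]$ and primitive over $Z$ (by Gauss, as $\deg_{\underline y}(P)\ge 1$), it is irreducible in $K[t,\underline y]$, and because $t\mapsto m_0+\delta t$ is a $K$-algebra automorphism of $K[t,\underline y]$, the polynomial $\widetilde P$ is also irreducible in $K[t,\underline y]$ and still of degree $\ge 1$ in $\underline y$. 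Because $Z$ is Hilbertian (and imperfect if of characteristic $p>0$), the Hilbert subset $H_K(\widetilde P)$ contains infinitely many elements of $Z$. For each such $\ell$, the element $m:=m_0+\delta\ell\in Z$ satisfies: $P(m,\underline y)=\widetilde P(\ell,\underline y)$ is irreducible in $K[\underline y]$ and primitive with respect to $Z$, and a final application of Gauss's lemma yields irreducibility in $Z[\underline y]$. Varying $\ell$ produces infinitely many good $m$.

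The only delicate point is the substitution step: one must verify that $P$, being irreducible in $Z[t,\underline y]$, remains irreducible in $K[t,\underline y]$, and that this property passes through the $K$-automorphism to $\widetilde P$. Both are standard applications of Gauss's lemma for the UFD $Z$. The real strategic content is the two-stage argument: relative Schinzel secures primitivity along the complete arithmetic progression $m_0+\delta Z$, and then the Hilbertian property of $Z$ locates an $\ell$ inside that progression making the specialized polynomial irreducible over $K$.
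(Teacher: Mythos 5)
Your proof is correct and follows essentially the same route as the paper's: extract the coefficient polynomials in $Z[t]$, apply the relative Schinzel Hypothesis to find $m_0$ with $P(m_0,\underline y)$ primitive, use the periodicity property of Remark~\ref{rem:periodic} to propagate primitivity along the arithmetic progression $m_0+\delta Z$, and finish by combining Hilbertianity (over $K$) with Gauss's lemma (over the UFD $Z$). The only cosmetic difference is that the paper dismisses the degenerate single-coefficient case by referring back to the opening reduction in the proof of Theorem~\ref{thm:HIT-main}, whereas you spell it out directly; the substance is identical.
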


\begin{proof}[Proof of Proposition \ref{thm-intro2-general}] Denote the nonzero coefficients in $Z[t]$ of $P(t,\underline y)$, viewed as a polynomial in $\underline y$, by $P_1(t),\ldots,P_s(t)$. As noted in the proof of Theorem \ref{thm:HIT-main}, one may assume with no loss that $s\geq 2$. Set $Q={\rm Frac}(Z)$. The polynomials $P_1(t),\ldots,P_s(t)$ have no common divisor in $Q[t]$ (since $P(t,\underline y)$ is irreducible in $Z[t,\underline y]$ and $Z$ is a UFD) and they satisfy assumption (AV2) (since $P(t,\underline y)$ satisfies (AV3)). It follows from the relative Schinzel Hypothesis that there exists $m_0\in Z$ such that $P_1(m_0),\ldots,P_s(m_0)$ have no common divisor in $Z$; equivalently, $P(m_0,\underline y)$ is primitive \hbox{w.r.t.} $Z$. 
As in Section \ref{ssec:delta}, let $\delta$ be a nonzero element of the ideal $(\sum_{i=1}^s P_i(t) Z[t]) \cap Z$. From Remark \ref{rem:periodic}, for every $\ell \in Z$, $P_1(m_0 + \ell \delta),\ldots,P_s(m_0 + \ell \delta)$ have no common divisor in $Z$, i.e. $P(m_0 + \ell \delta,\underline y)$ is primitive \hbox{w.r.t.} $Z$ as well. The polynomial $P(m_0+\delta t,\underline y)$ is in $Z[t,\underline y]$ and is irreducible in $Q[t,\underline y]$. As $Z$ is a Hilbertian ring, imperfect if of characteristic $p>0$, infinitely many $\ell \in Z$ exist such that $P(m_0 + \ell \delta,\underline y)$ is irreducible in $Q[\underline y]$. As these polynomials are primitive \hbox{w.r.t.} $Z$, they are irreducible in $Z[\underline y]$.
\end{proof}

\begin{proof}[Proof of Theorem \ref{thm-intro2}] In Theorem \ref{thm-intro2}, $Z$ is a polynomial ring $R[u_1,\ldots,u_r]$ ($r\geq 1$) over a UFD $R$. Thus $Z$ is imperfect if of characteristic $p>0$. It is a Hilbertian ring due to the fact that  its fraction field $K(u_1,\ldots,u_r)$ (with $K={\rm Frac}(R)$) has a product formula, conjoined with \cite[Theorem 4.6]{BDN19}. Furthermore, from Theorem \ref{thm:main_theorem}(c),  $R[u_1,\ldots,u_r]$ satisfies the relative Schinzel Hypothesis. Note further that from Lemma \ref{lem:prelim-proof-R[u]}, the quotients of $R[u_1,\ldots,u_r]$ by prime principal ideals are infinite ($Z\not= \Ff_q[u_1]$ in Theorem \ref{thm-intro2}) and hence assumption (AV2) for the coefficients in $Z[t]$ of $P$ (viewed as a polynomial in $\underline y$) is satisfied. But that assumption (AV2) is exactly assumption (AV3) of Proposition \ref{thm-intro2-general}, which is then satisfied as well. Thus Proposition \ref{thm-intro2-general} applies (with $\underline y=y$ a single variable) and yields the conclusion of Theorem \ref{thm-intro2}.
\end{proof}

\bibliographystyle{alpha}
\bibliography{gcdR1.bib}
\end{document}